\newtheorem{thm}{Theorem}[section]
\newtheorem{lem}[thm]{Lemma}
\newtheorem{cor}[thm]{Corollary}
\newtheorem{remark}[thm]{Remark}
\newtheorem{defn}{Definition}[section]
\newtheorem{cnd}{Condition}[section]
\newcommand{\li}{\ar@{-}}   % line instead of arrow -------
\newcommand{\ld}{\ar@{--}}  % dashed line           - - - -
\newcommand{\ls}{\ar@{.}}   % dotted line           .......
\newcommand{\gh}{\ar@{}}    % ghost line
\newcommand{\dm}{\ar@{<->}} % dimension arrow       <----->
\newcommand{\iv}{\ar@{|-|}} % interval line         |-----|
\begin{document}

% ------------
%  Title Page
% ------------
\title[Differences of Random Cantor Sets ]{ Differences of random Cantor sets and  lower spectral radii }
\author{F.~Michel Dekking \and
Bram Kuijvenhoven}

%%\date{October 22, 2008}

\maketitle

{\footnotesize Delft University of Technology, Mekelweg 4, 2628 CD Delft, The Netherlands}

\medskip

\begin{center}
{ \bf Abstract}\\
\end{center}
{\footnotesize We investigate the question under which conditions  the algebraic difference between two independent random Cantor sets
$C_1$ and $C_2$  almost surely contains  an interval, and when not. The natural condition is whether the sum $d_1+d_2$ of the Hausdorff dimensions of the sets
 is smaller (no interval) or larger (an interval) than 1. Palis conjectured that \emph{generically} it should be true that
 $d_1+d_2>1$ should imply that $C_1-C_2$ contains an interval. We prove that for 2-adic random Cantor sets generated by a vector of probabilities $(p_0,p_1)$
  the interior of the region where the Palis conjecture does not hold is given by those $p_0,p_1$ which satisfy $p_0+p_1>\sqrt{2}$ and $p_0p_1(1+p_0^2+p_1^2)<1$.
 We furthermore prove a general result which characterizes the interval/no interval property in terms of the lower spectral radius of a set of $2\times 2$
 matrices. }

\section{Introduction}

The \kw{algebraic difference}
 of two  sets $A,B$ of real numbers is defined as:
\begin{align*}
  A - B &:= \{ x - y : x \in A, y \in B \}.
\end{align*}
An interesting situation arises when $A$ and $B$ are relatively small, and $A-B$ large.
For example,  $A$ and $B$ are Cantor sets, but $A-B$ contains an interval.
Whether this will happen or not depends on the size of $A$ and $B$. For instance, if the sum of the Hausdorff dimensions of
$A$ and $B$ is smaller than 1, then  $A-B$ will have a Hausdorff dimension smaller than 1, and can not contain an interval.
A well known conjecture by Palis (\cite{Palis}) states that---conversely--- if
\begin{align} \eqlbl{Palis conjecture cond}
  \Hdim A + \Hdim B > 1,
\end{align}
 then \emph{generically} it should be true that $A-B$ contains an interval.
In this paper we will follow \cite{Larsson} and \cite{DS} and interpret 'generically' as 'almost surely'
with respect to a probability measure.
The central question in this paper is:\\

 \emph{Under which conditions does the algebraic difference between two independent random Cantor sets
 almost surely contain an interval, and when not?}\\

 Here we will consider a canonical class of random Cantor sets, which randomize the classical triadic Cantor set $C$ in
 a natural  way. In this introduction we will give a loose description. Instead of discarding the middle interval and
 keeping the left and the right interval at every step in the construction of $C$  by decreasing intersections of unions of triadic
 intervals, we do the following: fix three numbers $p_0$, $p_1$ and $p_2$ between 0 and 1. Then at every step,
 retain the left interval with probability  $p_0$ (discard it with probability $1\!-\!p_0$), the middle with probability $p_1$
 and the right interval with  probability $p_2$, independently of each other, and of the actions in other intervals at all levels.
 (See also \figref{labeled tree}, where we used trees to describe this recursive construction).

 More generally we consider the $M$-adic case for integers $M=2,3,\dots$, where intervals are recursively divided into $M$ subintervals of equal length,
 which are retained  with \emph{survival probabilities} $p_0, \dots, p_{M-1}$.

 \noindent It turns out that the \emph{cyclic correlation coefficients} $\gamma_k$, defined  by
 \begin{align} \eqlbl{gamma_k def}
 \gam{k} &:= \sum_{i=0}^{M-1} p_ip_{i+k},
\end{align}
 for $k=0,...,\!M-\!1$ play an important role (here the indices $i+k$ should be taken modulo $M$). Indeed, the main result in \cite{DS}
 is the following.

 \begin{thm} \label{thm:DS}
{\rm ([DS08])} Consider two independent random Cantor sets $F_1$ and $F_2$ with survival probablities $p_0, \dots, p_{M-1}$.
\begin{enumerate}
 \item[(a)] If  $\gam{k}>1$ for all $ k=0,...,M\!-\!1$, then $F_1-F_2$ contains an interval a.s.\ on $\set{F_1-F_2\neq\es}$.
 \item[(b)] If  $\gam{k},\gam{k+1}<1$ for some $k$, then $F_1-F_2$ contains no interval a.s.
\end{enumerate}
\end{thm}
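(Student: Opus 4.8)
The plan is to translate the interval/no-interval question into a combinatorial percolation problem about occupied ``columns'', and then read off the two cases from the criticality of certain $2\times2$ matrices built from the $\gam{k}$.

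\smallskip
\noindent\textbf{Set-up.} Let $F_i^{(n)}$ denote the $n$-th approximation of $F_i$ (a union of $M$-adic intervals of length $M^{-n}$). Then $F_1-F_2=\bigcap_{n\ge1}D_n$ with $D_n:=F_1^{(n)}-F_2^{(n)}$, a decreasing sequence of finite unions of intervals. Since $[uM^{-n},(u+1)M^{-n}]-[vM^{-n},(v+1)M^{-n}]=[(u-v-1)M^{-n},(u-v+1)M^{-n}]$, the set $D_n$ is the union of the length-$2M^{-n}$ intervals $J_w:=[(w-1)M^{-n},(w+1)M^{-n}]$ over the \emph{occupied columns}, i.e.\ those $w\in\mathbb Z$ for which some surviving level-$n$ cell $(u,v)$ has $u-v=w$; write $N^{(n)}_w$ for the number of such cells. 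Consecutive intervals $J_w,J_{w+1}$ overlap in exactly one $M$-adic cell, so the $M$-adic box of level $n$ and index $m$ lies in $D_n$ iff column $m$ or column $m+1$ is occupied. Hence $F_1-F_2$ \emph{contains an interval iff some $M$-adic box $B$ is covered forever}, i.e.\ $B\subseteq D_\ell$ for all $\ell$. Finally, the occupation evolves by a near-branching rule: a surviving cell on column $w$ at level $n$ has, on column $wM+r$ at level $n+1$, a number of children of conditional mean $c_r:=\sum_i p_ip_{i-r}$ (with $c_r=0$ for $\lvert r\rvert\ge M$). The bookkeeping that matters: for a box $B$ with bounding columns of residues $a,a+1\bmod M$, passing from $B$ to its (unique) child of index $\equiv k\pmod M$ sends the pair $V_B:=\bigl(N^{(n)}_{(\mathrm{left\ col})},N^{(n)}_{(\mathrm{right\ col})}\bigr)$, in conditional mean, through
\[
A_k=\begin{pmatrix} c_k & c_{k-M}\\ c_{k+1} & c_{k+1-M}\end{pmatrix},\qquad\text{whose two row sums are }\gam{k}\text{ and }\gam{k+1}.
\]

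\smallskip
\noindent\textbf{Part (b).} Assume $\gam{k},\gam{k+1}<1$. Then $\lVert A_k\rVert_\infty=\max(\gam{k},\gam{k+1})<1$, so $A_k^{\,j}\to0$. Fix any $M$-adic box $B=B^{(0)}$ of level $n_0$ and let $B^{(j+1)}$ be the child of $B^{(j)}$ whose index is $\equiv k\pmod M$ (this exists for every box). If $B\subseteq F_1-F_2$ then $B^{(j)}\subseteq F_1-F_2$, so $B^{(j)}$ is covered at level $n_0+j$, so the vector $V_j:=V_{B^{(j)}}$ is nonzero for every $j$. But each cell on a bounding column of $B^{(j)}$ descends from a cell on a bounding column of $B^{(j-1)}$, with reproduction means exactly the entries of $A_k$; hence $\mathbb E[V_j\mid\mathcal F_{n_0}]=A_k^{\,j}V_0\to0$ and $\mathbb E\lVert V_j\rVert\to0$. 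Therefore
\[
P(B\subseteq F_1-F_2)\ \le\ P(V_j\neq0\text{ for all }j)\ \le\ \inf_j\,\mathbb E\lVert V_j\rVert\ =\ 0 .
\]
A union over the countably many $M$-adic boxes shows a.s.\ $F_1-F_2$ contains no interval. (The dependencies in the construction --- a level-one copy $F_1^{(i)}$ shared across the $F_2^{(j)}$'s, and siblings sharing a parent word --- are nonnegative correlations and do not affect these first-moment bounds.)

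\smallskip
\noindent\textbf{Part (a).} Assume $\gam{k}>1$ for all $k$ and set $\gamma_{\min}:=\min_k\gam{k}>1$; then $A_a\mathbf 1\ge\gamma_{\min}\mathbf 1$ for every $a$ and products of two or more $A_a$'s are strictly positive. I would first show that any box $B$ with both bounding columns occupied is covered forever with probability bounded below: the cell-counts on the bounding columns of the descendants of $B$ form a supercritical branching system (mean matrices among the $A_a$, all with row sums $\ge\gamma_{\min}>1$), which survives with positive probability; on survival these counts eventually exceed any prescribed $N$, and once they do, a union bound over the $\le M^{\ell}$ descendant bounding columns at level $n_0+\ell$ --- each vacant with probability at most $\exp(-c\,\gamma_{\min}^{\,\ell}N)$, summable in $\ell$ --- shows that with large conditional probability no descendant column is ever vacant, i.e.\ $B$ is covered forever. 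Thus $P(B\subseteq F_1-F_2)\ge\delta>0$ whenever both bounding columns of $B$ are occupied. Since on $\set{F_1-F_2\neq\es}$ such boxes occur along arbitrarily deep, disjoint sub-trees --- each an essentially independent ``attempt'' succeeding with probability $\ge\delta$ --- a Borel--Cantelli/zero--one argument upgrades this to: a.s.\ on $\set{F_1-F_2\neq\es}$, $F_1-F_2$ contains an interval.

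\smallskip
\noindent\textbf{Main obstacle.} The delicate step is the upgrade in part (a) from ``positive probability'' to ``a.s.\ on $\set{F_1-F_2\neq\es}$'': one must supply a genuine zero--one input and, on the survival event, extract infinitely many disjoint, essentially independent attempts --- which forces one to tame the correlations built into the construction (harmless for the upper bounds of part (b) but not for the lower bounds of part (a); one remedy is to work only with sub-collections of cells having pairwise distinct $F_1$- and $F_2$-addresses). A lesser point is making the supercriticality and concentration estimates in part (a) uniform, given that a couple of entries of the matrices $A_0,\dots,A_{M-1}$ vanish.
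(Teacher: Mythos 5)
Your Part~(b) is essentially the paper's argument: the column/triangle mean matrices have $\infty$-norm (resp.\ column-sum norm) equal to $\max(\gam{k},\gam{k+1})<1$, so a first-moment bound along the deterministic path of children with residue $k$ kills every potential $M$-adic box. This matches what the paper does (and indeed the paper remarks that this direction is the easy one, needing neither symmetry nor the joint survival condition).

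Part~(a), however, has a genuine gap that you flag but do not close, and it is exactly the crux of the result. Your claim that ``the cell-counts on the bounding columns of the descendants of $B$ form a supercritical branching system'' is false as stated: two surviving level-$n$ cells $(u,v)$, $(u',v')$ are \emph{aligned} whenever they share a row or a column, and then their offspring counts are dependent --- in the independent-interval model they literally share random bits. So you do not have a $2$-type Galton--Watson process, and none of the off-the-shelf survival/large-deviation estimates for branching processes apply directly. The paper resolves this with two devices you would need to supply. First, a \emph{$\Delta$-pair}: a pair of one $L$- and one $R$-triangle in the same column whose underlying cells have distinct $F_1$-coordinates \emph{and} distinct $F_2$-coordinates; unalignedness makes the two halves of a $\Delta$-pair evolve independently, so one can seed a genuine two-type branching structure. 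One needs a separate probabilistic lemma (the paper's \lemref{Delta-pair wpp}, using both $\gam{0}>1$ and $\gam{1}>1$) to show such a pair occurs at some level with positive probability; your sketch never produces this seed. Second, the combinatorial \emph{grouping lemma} (\lemref{grouping}, Lemma~3 of [DS]): among the cells in a column one can always couple the ``odd'' and ``even'' cells and $3$-colour the pairs so that pairs of the same colour are pairwise unaligned, with each colour class of size $\geq\lfloor N/3\rfloor$. This is what turns ``at least $N$ cells'' into ``at least $\lfloor N/3\rfloor$ \emph{independent} $\Delta$-pairs'', which is the input needed for the exponential-growth estimate on all subcolumns simultaneously (the paper's \lemref{exponential growth}). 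Your intended union bound with vacancy probability $\exp(-c\,\gamma_{\min}^{\ell}N)$ implicitly assumes this independence, and the upgrade from ``positive probability'' to ``a.s.\ on $\{F_1-F_2\neq\es\}$'' also needs it: the ``disjoint, essentially independent attempts'' must be taken in pairwise \emph{unaligned} surviving squares, whose number one then shows tends to infinity on non-extinction. In short, the remedy you gesture at in your final paragraph (``work only with sub-collections of cells having pairwise distinct $F_1$- and $F_2$-addresses'') is precisely the $\Delta$-pair/grouping machinery; it is not a minor technicality to be deferred but the central construction of the proof, and without it Part~(a) does not go through.
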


 This implies that if $F_1$ and $F_2$ are two independent copies of the random \emph{triadic} Cantor set described above, then their difference $F_1-F_2$
 contains an interval a.s.~ if $p_0p_1+p_1p_2+p_2p_0>1$, and does not contain an interval a.s.~ if $p_0p_1+p_1p_2+p_2p_0<1$. However,
 the triadic case is special, and Theorem~\ref{thm:DS} gives only a partial solution to, e.g., the dyadic case, where intervals
 are split in two all the time, and retained with probability $p_0$ and $p_1$. Here Theorem~\ref{thm:DS} merely yields that $F_1-F_2$
 contains an interval a.s.~ if $2p_0p_1>1$, and does not contain an interval if $p_0^2+p_1^2<1$. In  \secref{2-adic Cantor sets}
 we will fill the gap, and completely classify dyadic random Cantor sets with respect to this property (except on the separating curve).

 The tool that is used in the proof of this result is that of \emph{higher order} Cantor sets, which has been introduced in  \cite{DS}.
 The same tool will enable us in \secref{spectral radius connection} to obtain a general classification result in terms of the \emph{lower spectral radius} of a certain set of matrices.

 A major problem that arises is that the `independent interval' property gets
 lost if one passes to higher order Cantor sets. It is therefore important (and not just for the sake of generalization) to consider a more complex
 mechanism to generate random Cantor sets. The obvious way to allow for dependence is to define a \emph{joint} survival distribution $\mu$ on the set of all subsets of $\{0,...,M\!-\!1\}$. In \secref{Joint survival} we give a version of Theorem~\ref{thm:DS} for this case.

% -----------------------
%  Chapter: Construction
% -----------------------
\section{Construction} \seclbl{construction}

% ---------------------------------------
%  Section: Trees, string and expansions
% ---------------------------------------
%%%_%%%%\subsection{\BKl Trees, strings and expansions\BKr} \seclbl{trees, strings and expansions}
The construction of $M$-adic Cantor sets is intimately related to $M$-ary trees and $M$-ary expansions of numbers.

Let $M\ge2$ be an integer.
An $M$-ary tree is a tree in which every node has precisely $M$ children.
The nodes are conveniently identified with strings over an alphabet of size $M$;
 we use the alphabet $\alphabet:=\{0,\dots,M-1\}$.

Strings over $\alphabet$ of length $n$ are denoted as $\tns=\tn$, where $i_1,\dots,i_n\in\alphabet$.
The empty string is denoted by $\es$ and has length $0$.
The concatenation of strings $\tn$ and $\tnj$ is simply denoted by $\tn\tnj$.

The \kw{$M$-ary tree} $\tree$
 is defined as the set of all strings over the alphabet $\alphabet$.
%%%\index{M-ary tree@$M$-ary tree}%
%%%\index{tree|see{$M$-ary tree}}%
The root node is the empty string $\es$.
%%%\index{node!root node}%
The \kw{children} of each node $\tn\in\tree$ are the nodes $\tn i_{n+1}$ for all $i_{n+1}\in\alphabet$.
The \kw{level} of a node corresponds to its length as a string.
%%%\index{node!level}%
For each $n\ge0$, the set of all nodes at level $n$ is denoted by $\tree[n]$.
It thus holds that
\begin{align*}
   \tree
 = \Union_{n\ge0} \tree[n]
% = \Union_{n\ge0} \alphabet^n
 = \Union_{n\ge0} \Union_{i_1\in\alphabet} \cdots \Union_{i_n\in\alphabet} \set{\tn}.
\end{align*}
%%%\index{labeled tree}%
%%%\index{M-ary tree@$M$-ary tree!labeled tree}%
%%_%%A \kw{labeled tree} is a function whose domain is the $M$-ary tree $\tree$.

Strings over the alphabet $\alphabet$ can also be interpreted as $M$-ary expansions of numbers.
For all $\tn\in\tree$ we let $\radix[M]{\tn}$ denote the value of $\tn$ as an $M$-ary number:
\begin{align} \eqlbl{M-ary value}
  \radix[M]{\tn} & := \sum_{k=1}^{n} M^{n-k} i_k.
\end{align}
Consequently, $\radix{\tn}$ takes its value in the range $0,\dots,M^n\!-1$.
%%-%%Examples of $M$-ary expansions are
%%-%% $\radix[2]{11011}=16+8+2+1=27$,
%%-%% $\radix[5]{123}=25+10+3=38$.

% -----------------------------
%  Section: Random Cantor sets
% -----------------------------
\subsection{Random Cantor sets} \seclbl{random Cantor set construction}
%%%\index{random Cantor set}%
%%%\index{random Cantor set!construction}%
We consider the construction of \kw{random $M$-adic Cantor sets} on the interval $[0,1]$.
The construction is an iterative procedure: we start with the entire interval $[0,1]$,
 and at each level of the construction,
 the intervals surviving so far are `split' into $M$ equally sized closed subintervals,
 of which a certain  random subset is allowed to survive at the next level.
The random Cantor set is a stochastic object which consists of those points in $[0,1]$
 that persist at all levels. Here, when we speak of `splitting'  a closed set into some smaller closed sets,
 it should be understood that the smaller sets need not be disjoint, but their interiors are required to be disjoint.

We consider the probability measure $\Cpm$ on the space of $\{0,1\}$-labeled trees $\{0,1\}^\cT$,
 where we label each node $\tn\in\cT$ with $X_\tn\in\{0,1\}$
 and $\mu$ is a probability measure on $2^{2^\alphabet}$
 called the \emph{joint survival measure}. It is of course determined by its restriction to ${2^\alphabet}$,
 which we also denote $\mu$, and call the joint survival \emph{distribution}.
%%%\index{joint survival distribution}%
The measure $\Cpm$ is defined by requiring that $\Cprob{X_\es=1}=1$ and that for all $\tn\in\tree$ the  random sets
\begin{align} \eqlbl{X vector sim mu}
  \set{i_{n+1}\in\alphabet: X_{\tn i_{n+1}} = 1} \end{align}
are independent and identically distributed according to $\mu$.

The $n$-th level $M$-adic subintervals of $[0,1]$ are defined by
\begin{align} \eqlbl{M-ary intervals}
 I_\tn & := \sfrac{1}{M^n} \bigl[\radix{\tn}, \radix{\tn} + 1\bigr],
\end{align}
for all $\tn\in\tree$.
%%%\index{M-adic interval@$M$-adic interval}%
The $n$-th level intervals that survive in the $n$-th~level approximation of the random Cantor set
 are the ones that are indexed by the  nodes in the \emph{level $n$ survival} set
\begin{align} \eqlbl{surviving nodes}
  S_n & := \set{\tn: X_{i_1} = X_{i_1i_2} = \dots = X_{\tn} = 1},
\end{align}
for all $n\ge0$.
The random Cantor set $F$ is given as the intersection of all its $n$-th level approximations,
 which we denote by $F^n$:
\begin{align*}
 F := \Intersec_{n=0}^\infty F^n = \Intersec_{n=0}^\infty \Union_{\tn\in S_n} I_\tn.
\end{align*}
An important property of  random Cantor sets is their self-similarity:
 conditional on the survival of any $n$-th level $M$-adic interval,
 the process starting at that interval (scaled by $M^n$) has the same distribution as the whole process,
 which starts at $I_{\es}=[0,1]$.
%%%\index{self-similarity!random Cantor set}%

The vector of \emph{marginal} probabilities $\vc{p}:=\left(p_0,\dots,p_{M-1}\right)$ is defined by
\begin{align} \eqlbl{marginal probabilities}
  p_i := \Cprob{X_i = 1},
\end{align}
for all $i\in\alphabet$.
%%%\index{vector of marginal probabilities}%
%%%\index{marginal probability!algebraic difference}%
Note that these marginal probabilities do not need to sum up to $1$.
The joint survival distribution $\mu$ can be chosen such that the $X_i$, $i\in\alphabet$, are
 $M$ independent Bernoulli variables; the respective probabilities of success then
 equal the marginal probabilities $p_0,\dots,p_{M-1}$. In this case we call $F$ an `independent interval' Cantor set.

The traditional deterministic triadic (so $M=3$) Cantor set is obtained
with the measure $\mu$ defined by $\mu(\{0,2\})=1$. Its  vector of marginal probabilities is $\vc{p}=(1,0,1)$.

\begin{figure}[t]
\centering
\includegraphics*[width =12.5cm]{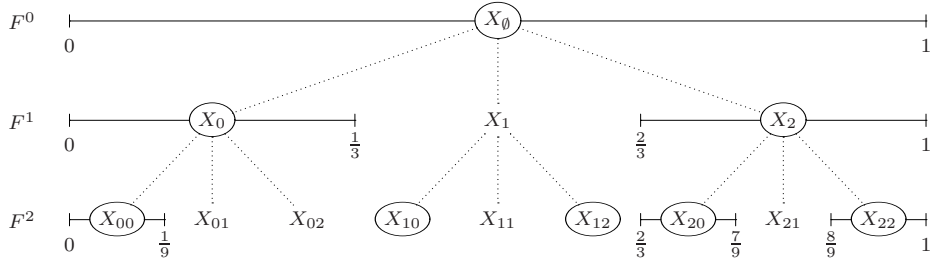}
\caption{The first three levels of a realization of a  tree labeled by $(X_{\tn})$ with $\vc{p}=(1,0,\frac{1}{2})$,
  with the surviving intervals in the approximations $F^n$.
The encircled  $X_{\tn}$ correspond to  nodes obtaining the value $1$.}%
\figlbl{labeled tree}%
\end{figure}

The number $\setcard{S_n}$ of level $n$ intervals selected in $F^n$,
 is a branching process with as offspring distribution the distribution of $\setcard{S_1}$.
Since the $F^n$ are non-increasing, $F=\es$ if and only if
 the branching process $\seq{\setcard{S_n}}$ dies out.
%%%\index{random Cantor set!extinction}%
Since $\Cex[\setcard{S_1}]=p_0+\cdots+p_{M-1}=\norm{\vc{p}}_1$,
 it follows that  $F\neq\es$ with positive probability if and only if
 $$ \norm{\vc{p}}_1>1 \quad\text{ or }\quad \Cprob{\setcard{S_1} = 1} = 1.$$
 Discarding the uninteresting case on the right, we will assume henceforth that
\begin{equation} \eqlbl{no extinction}
  \norm{\vc{p}}_1>1.
\end{equation}

% -------------------------------
%  Section: Algebraic difference
% -------------------------------
\subsection{Algebraic difference}

%%_%%\begin{figure}
%%_%%\centering{\input{algebraicdifference-productsetprojection.pstex_t}}
%%_%%\caption{The algebraic difference $A-B$ of $A,B\subseteq\reals$
%%_%% can be interpreted as a projection of the product set $A\times B$ by the map $(x,y)\mapsto x-y$.
%%_%%Note that in the picture on the right, the sets $A$, $B$ and $A\times B$ have been scaled up by a factor $\sqrt{2}$
%%_%% compared to the set $A-B$ and compared to the picture on the left.}
%%_%%\figlbl{algebraic difference as product set projection}
%%_%%\end{figure}

%%%\index{algebraic difference}%%%%\index{arithmetic difference!|see{algebraic difference}}
%%%\index{projection!algebraic difference}%

%%_%%See \figref{algebraic difference as product set projection} for an illustration of this.

We consider the algebraic difference $F_1-F_2$ between two independent random $M$-adic Cantor sets $F_1$ and $F_2$.
In general, we denote the joint survival distribution of $F_1$ by $\mu$ and that of $F_2$ by $\lambda$.
The corresponding marginal distributions will be denoted by $\vc{p}$ and $\vc{q}$ respectively.
In   \secref{higher order Cantor sets} and further we will restrict ourselves to the \emph{symmetric case},
 where $\vc{p}=\vc{q}$.
The algebraic difference $F_1-F_2$ can be seen as a projection under 45\degrees{} of the Cartesian product $F_1 \times F_2$.
Thus $F_1-F_2$ is  defined on the product space   of the probability spaces of $F_1$ and $F_2$.
We will use $\bP:=\Cpm\times \Cpl$ to denote the corresponding product measure
 and $\ex$ to denote expectations with respect to this probability.

% -------------------------------------
%  Chapter: Triangles and expectations
% -------------------------------------
\section{Triangles and expectations} \seclbl{triangles and expectations}

% -------------------------------------
%  Section: Triangles and expectations
% -------------------------------------
%%_%%\subsection{Algebraic difference approximation}
Let $F_1$ and $F_2$ be two independent $M$-adic random Cantor sets with
 joint survival distributions $\mu$ and $\lambda$, respectively.
Denote by $F_1^n$ and $F_2^n$ their $n$-th level approximations ($n\ge0$) and
 define the following subsets of the unit square $[0,1]^2$:
\begin{align*}
  \Lambda^n &:= F_1^n \times F_2^n, \quad n\ge0, &
  \Lambda   &:= F_1 \times F_2 = \bigcap_{n=0}^\infty \Lambda^n.
\end{align*}
Note that as $F_1^n \downarrow F_1$ and $F_2^n \downarrow F_2$, also $\Lambda^n \downarrow \Lambda$.
Let $\phi:\reals^2\to\reals$ denote the 45\degrees{} projection given by $\phi(x,y)=x-y$, then $F_1-F_2=\phi(\Lambda)$.
As $\phi$ is a continuous function and $\{\Lambda^n\}_{n=0}^\infty$ is
 a non-increasing sequence of compact  sets,
 it follows that the algebraic difference $F_1-F_2$ can be written as
\begin{align*}
    F_1\!-\!F_2
  = \phi(\Lambda)
 &= \phi\Big(\!\bigcap_{n=0}^\infty \Lambda^n\!\Big)
  = \bigcap_{n=0}^\infty \phi(\Lambda^n)
  = \bigcap_{n=0}^\infty \phi(F_1^n \!\times\!F_2^n)
  = \bigcap_{n=0}^\infty (F_1^n\! -\! F_2^n).
\end{align*}

% --------------------------------
%  Squares, columns and triangles
% --------------------------------
\subsection{Squares, columns and triangles}

\begin{figure}[t]
\centering
\includegraphics*[width =12cm]{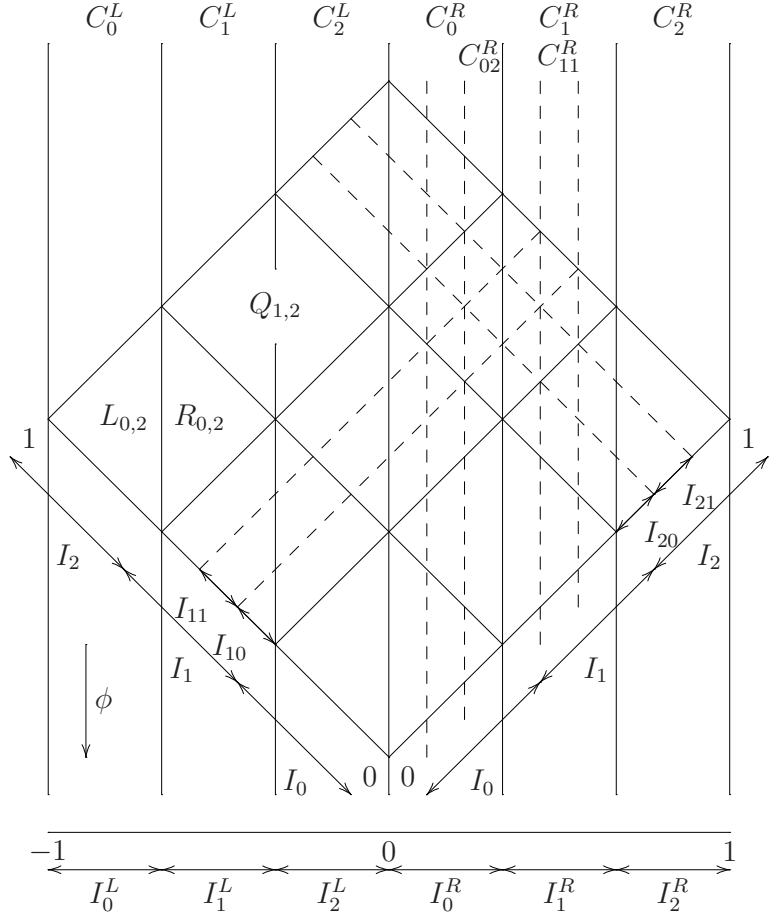}
\caption{
 An illustration for $M=3$ of the unit square $[0,1]^2$ rotated by 45\degrees{},
  being projected by $\phi$ to a $\sqrt{2}$-scaled-down version of $[-1,1]$.
 The columns $C^U_{\tnks}$ split the $n$-th level squares $Q_{\tns,\tnjs}=I_{\tns} \times I_{\tnjs}$ into
  the `left' and `right' triangles $L_{\tns,\tnjs}$ and $R_{\tns,\tnjs}$.}%
\figlbl{projection}%
\end{figure}

The $\Lambda^n$ are unions of $M$-adic squares
\begin{align*}
  Q_{\tn,\tnj} := I_{\tn} \times I_{\tnj},
\end{align*}
with $\tn,\tnj\in\tree[n]$ and $n\ge0$.
%%%\index{M-adic square@$M$-adic square}%
See \figref{projection} for a graphical representation of these $M$-adic squares and their $\phi$-projections.
Observe that the projections $\phi(Q_{\tn,\tnj})$ are equal to
  unions of two subsequent level $n$ $M$-adic intervals in $[-1,1]$.

In order to be able to represent the $M$-adic intervals that are in $[-1,0]$,
 we generalize our notation of $M$-adic intervals on $[0,1]$ to the entire real line.
For any $n\ge0$ and $k\in\integers$ we define
\begin{align} \eqlbl{I_n(k) def}
  I_n(k) := \tfrac{1}{M^n} \left[k,k+1\right] = \left[\tfrac{k}{M^n},\tfrac{k+1}{M^n}\right].
\end{align}
%%%\index{M-adic interval@$M$-adic interval}%
Note that $I_n(k+M^ni) = I_{\tnk} + i$ for all $k=\radix[M]{\tnk}$ and $i\in\integers$.
See also \figref{interval enumeration}.
The inverse images of these intervals under $\phi$ form diagonal `columns' in the plane $\reals^2$, denoted by
\begin{align*}
 C_n(k) &:= \inv{\phi}\left(I_n(k)\right),
\end{align*}
for all $k\in\integers$.
%%%\index{column}%
%%%\index{M-adic column@$M$-adic column|see{column}}%

When rotating the unit square $[0,1]^2$ by 45\degrees{}, as in \figref{projection},
  the columns with $\phi$-image in $[-1,0]$ intersect with the `left' half of the unit square
 and those with $\phi$-image in $[0,1]$ intersect with the `right' half.
For this reason we distinguish between `left' and `right' $M$-adic intervals and columns
 by defining for any $\tnk\in\tree$
\begin{align*}
\begin{aligned}
&I^L_\tnk \!:=\!I_\tnk\!-\!1=\!I_n(\radix{\tnk}\!-\!M^n),\, I^R_\tnk\! :=\!I_\tnk=\! I_n(\radix{\tnk}), \\
&C^L_\tnk := C_n(\radix{\tnk}-M^n),\; C^R_\tnk := C_n(\radix{\tnk}).
\end{aligned}
\end{align*}
In fact, any $n$-th level $M$-adic square $Q_{\tn,\tnj}$
 is split into a `left' and a `right' triangle by the $M$-adic columns.
These triangles are called $L$-triangles and $R$-triangles, and are denoted by
\begin{align} \eqlbl{triangles}
\begin{aligned}
  L_{\tn,\tnj} &:= Q_{\tn, \tnj} \intersec C_n\left(\radix{\tn}-\radix{\tnj}-1\right), \\
  R_{\tn,\tnj} &:= Q_{\tn, \tnj} \intersec C_n\left(\radix{\tn}-\radix{\tnj}  \right),
\end{aligned}
\end{align}
for any $\tn,\tnj\in\tree$.

\begin{figure}[t]
\centering
\includegraphics*[width =12.5cm]{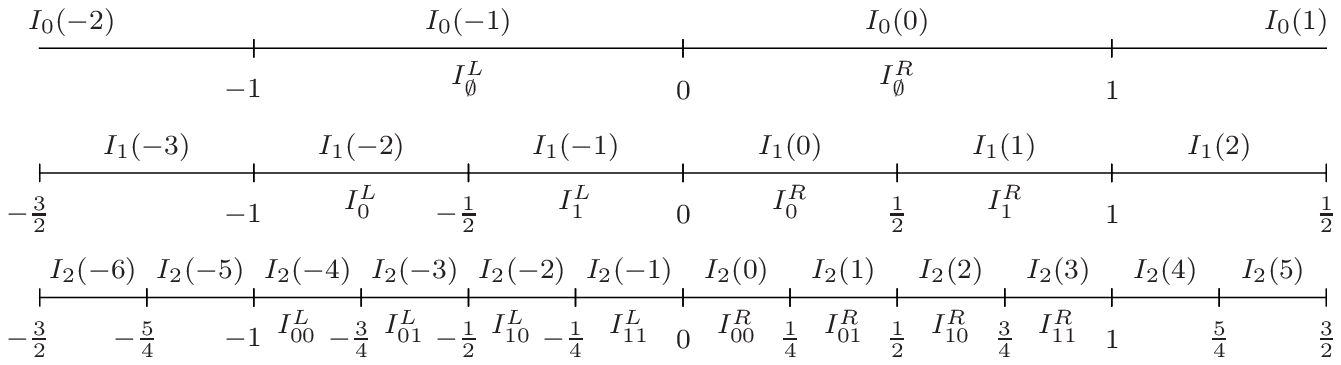}
\caption{Two ways of enumerating $M$-adic intervals (with $M=2$) --- $I_n(i)$, $i\in\integers$,
   and  $I^U_{\tnks}$, with $U\in\set{L,R}$ and $\tnks\in\tree$. Here $n=0,1,2$.}
\figlbl{interval enumeration}
\end{figure}

% --------------------------
%  Expected triangle counts
% --------------------------
\subsection{Triangle counts}

%%_%%\begin{figure}
%%_%%\centering{\input{trianglecounts.pstex_t}}
%%_%%\caption{The triangle counts $Z^{UV}(\tnks)$ and $Z^V(\tnks)$.
%%_%% Note that these counts refer to \emph{surviving} triangles (in $\Lambda^n$),
%%_%% so the gray triangles are triangles that \emph{might be} counted.}
%%_%%\figlbl{triangle counts}
%%_%%\end{figure}

%%_%%We count the number of left and right triangles in a column that are contained in $\Lambda^n$.
For all $U,V\in\set{L,R}$ and $\tnks\in\tree$ we let
\begin{align*}
    Z^{UV}(\tnks)
 := \setcard{\set{\big(\tns,\tnjs\big): Q_{\tns,\tnjs}\subseteq\Lambda^n, V_{\tns,\tnjs}\subseteq C^U_{\tnks}}}
\end{align*}
denote the number of level~$n$ $V$-triangles in $\Lambda^n \intersec C^U_\tnks$.
%%%\index{triangle!triangle count}%
Note that these $V$-triangles have been generated by the level~$0$ $U$-triangle.
We also denote the total number of $V$-triangles in columns $C^L_{\tnks}$ and $C^R_{\tnks}$ together by
\begin{align*}
  Z^V(\tnks) := Z^{LV}(\tnks) + Z^{RV}(\tnks),
\end{align*}
for all $\tnks\in\tree$.
These triangle counts (and their self-similarity property) are illustrated in \figref{dual columns}.

 \begin{figure}[b]
\centering
\includegraphics*[width =11cm]{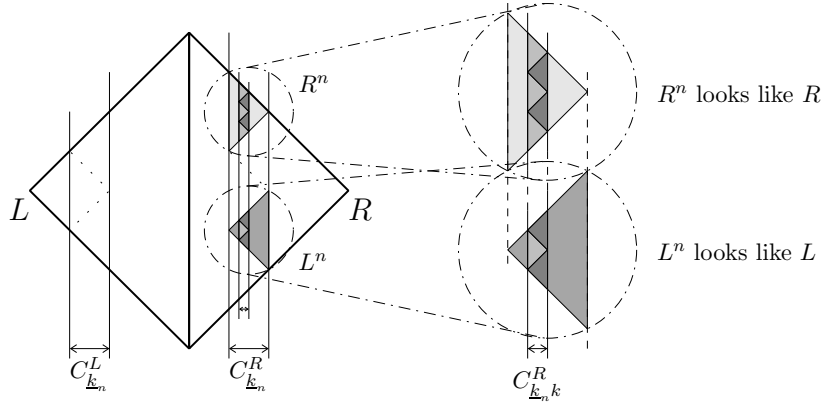}
\caption{A comparison of a level $n$ left triangle $L^n$ and right triangle $R^n$
 with the level $0$ left triangle $L$ and right triangle $R$.
 In particular, the expected number of (level $n+1$) $V$ triangles
 in the intersection of the subcolumn $C^R_{\tnks k}$ with the triangles $L^n$ and $R^n$ together
 equals $\ex Z^V(k)$, the expected number of (level $1$) $V$ triangles in $C^L_k$ and $C^R_k$ together.}%
\figlbl{dual columns}%
\end{figure}

An important observation is that an $M$-adic interval $I^U_{\tnks}$ is absent in
  $\phi(\Lambda^n)$ exactly when
 there are no triangles  in the corresponding column $C^U_{\tnks}$ in $\Lambda^n$:
\begin{align} \eqlbl{interval not in projection iff triangle counts zero}
  I^U_\tnks \not\subseteq \phi(\Lambda^n) \iff Z^{UL}(\tnks)=Z^{UR}(\tnks)=0.
\end{align}

The triangle counts $Z^{UV}(\tnks)$, with $k_1,k_2,\dots$ a fixed path,
 constitute a two type branching process in a varying environment with interaction:
  the interaction comes from the dependency between triangles that are \emph{aligned},
%%%\index{aligned}%
%%%\index{square!aligned}%
 i.e., triangles contained in respective squares $Q_{\tn,\tnj}$ and $Q_{i_1'\dots i_n',j_1'\dots j_n'}$
 with $\tn=i_1'\dots i_n'$ or $\tnj=j_1'\dots j_n'$.
 The \kw{expectation matrices} of the two type branching process are given by:
\begin{align} \eqlbl{exmat def}
 \exmat{\tnks} := \begin{bmatrix}
   \ex Z^{LL}(\tnks) & \ex Z^{LR}(\tnks) \\
   \ex Z^{RL}(\tnks) & \ex Z^{RR}(\tnks)
 \end{bmatrix},
\end{align}
where $\tnks\in\tree$.
%%%\index{expectation matrix}%
These matrices satisfy the basic relation
\begin{equation} \eqlbl{exmat expansion}
 \exmat{\tnk} = \exmat{k_1} \cdots \exmat{k_n},
\end{equation}
for all $\tnk\in\tree$.

%%Dekking and Simon recognized the importance of considering the
%% joint offspring of left and right triangles that are in the same column
%% because, as \eqref{eq:interval not in projection iff triangle counts zero} states,
%% a level $n$ $M$-adic interval is missing in the projection $\phi(\Lambda^n)$ if and only if
%%the number of left \emph{and} the number of right triangles in the corresponding column is zero.
%%\figref{dual columns} tries to give an intuitive argument showing that it makes sense to look at
%%the expected number of triangles generated in $C^L_k$ and $C^R_k$ together.

% -----------------------------------
%  Section: Correlation coefficients
% -----------------------------------
\subsection{Correlation coefficients} \seclbl{correlation coefficients}
Define the \kw{cyclic cross-correlation coefficients}
\begin{align} \eqlbl{gamma_k def}
 \gam{k} &:= \sum_{i=0}^{M-1} q_ip_{i+k},
\end{align}
 where the indices of $p$ should be taken modulo $M$, and $k\in\alphabet$.

%%%\index{correlation coefficient}%
%%%\index{correlation coefficient!definition}%
%%%\index{correlation coefficient!algebraic difference}%
%%%%\index{cyclic cross-correlation coefficient|see{correlation coefficient}}
%%%%\index{cyclic auto-correlation coefficient|see{correlation coefficient}}
This definition is extended to $\integers$ by setting
 $\gam{k+iM}:=\gam{k}$ for all $i\in\integers$.
For the symmetric case $\vc{p}=\vc{q}$, these coefficients are called the
 \kw{cyclic auto-correlation coefficients}.
For brevity, however, we will use the shorter term \kw{correlation coefficients}.
The smallest correlation coefficient value is denoted by
\begin{align} \eqlbl{gamma def}
 \gamma &:= \min_{k\in\alphabet} \gam{k}.
\end{align}
%%%\index{correlation coefficient!smallest}%
\lemref{exmat sum to gamma} below motivates the definition of the correlation coefficients,
 as they are in fact the triangle count expectations $\ex{Z^V(k)}$, $V\in\{L,R\}$.

For convenience, we define the vector $\vc{e}:=\evec$.

\begin{lem} \lemlbl{exmat sum to gamma}(\cite{DS})
For all $k\in\alphabet$ we have
\begin{align*}
  \vc{e}\exmat{k} = \evec\exmat{k} = \lvec{\ex Z^L(k)}{\ex Z^R(k)} = \gamvec{k}.
\end{align*}
\end{lem}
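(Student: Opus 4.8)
The plan is to compute the vector–matrix product $\vc{e}\,\exmat{k}$ directly from the definitions and show that each of the two resulting entries equals the relevant $\gam{}$-value. Since $\vc{e}=(1,1)$, the product $\vc{e}\,\exmat{k}$ is precisely the row vector whose first entry is $\ex Z^{LL}(k)+\ex Z^{RL}(k)=\ex Z^L(k)$ and whose second entry is $\ex Z^{LR}(k)+\ex Z^{RR}(k)=\ex Z^R(k)$; this is just the definition of $Z^V(k)=Z^{LV}(k)+Z^{RV}(k)$ together with linearity of expectation. So the identity $\vc{e}\,\exmat{k}=\lvec{\ex Z^L(k)}{\ex Z^R(k)}$ is essentially bookkeeping. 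The content is the final equality $\lvec{\ex Z^L(k)}{\ex Z^R(k)}=\gamvec{k}$, i.e.\ that $\ex Z^L(k)=\gam{k-1}$ and $\ex Z^R(k)=\gam{k}$ (reading off the components of $\gamvec{k}$ as defined). By symmetry of the construction it suffices to treat $\ex Z^R(k)$ carefully and then note that the $L$-column is obtained by an index shift of one.

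First I would unwind what $Z^R(k)$ counts: it is the number of level-$1$ $R$-triangles $R_{i,j}$ (with $i,j\in\alphabet=\tree[1]$) that lie inside the column $C^L_k\cup C^R_k=C_0(\radix{k})$ — wait, more precisely inside $C_1(\radix{k})$ after the appropriate shift — and for which the enclosing square $Q_{i,j}$ survives, i.e.\ $Q_{i,j}\subseteq\Lambda^1$. From the definition \eqref{triangles}, $R_{i,j}=Q_{i,j}\cap C_1(i-j)$, so $R_{i,j}$ is the $R$-triangle sitting in column index $i-j$. Collecting the $R$-triangles whose column index equals a fixed value $\ell$ (where $\ell=\radix{k}$ or its shift, depending on whether we are in the $C^L$ or $C^R$ half) amounts to summing over all pairs $(i,j)$ with $i-j\equiv \ell$, and the cyclic/modular wrap in the definition \eqref{gamma_k def} of $\gam{}$ is exactly what handles the pairs that straddle the boundary between the left and right halves of the square — this is where the two sub-columns $C^L_k$ and $C^R_k$ get fused into one cyclic count. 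Then $\ex Z^R(k)=\sum_{i-j\equiv k} \Pr[Q_{i,j}\subseteq\Lambda^1]=\sum_{i-j\equiv k} q_j p_i = \sum_j q_j p_{j+k}=\gam{k}$, using independence of $F_1$ and $F_2$ (so $\Pr[I_i\subseteq F_1^1, I_j\subseteq F_2^1]=p_i q_j$) and the marginal probability definitions \eqref{marginal probabilities}. The $L$-triangle computation is identical with column index shifted by one, giving $\ex Z^L(k)=\gam{k-1}$.

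The step I expect to be the main obstacle is getting the index arithmetic exactly right: tracking how the column label $\radix{k}$ of the level-$1$ interval $I^U_k$ relates to the differences $i-j$ and $i-j-1$ appearing in \eqref{triangles}, handling the shift by $M^n$ that distinguishes $I^L$ from $I^R$, and verifying that the pairs $(i,j)$ with $i<j$ (which live geometrically in the left half) and $i\ge j$ (right half) recombine precisely into the single cyclic sum $\sum_{i=0}^{M-1}q_i p_{i+k}$ with indices mod $M$. A clean way to organize this is to fix the path $k_1,k_2,\dots$ irrelevant at level $1$ and simply enumerate: for each residue class $r\in\{0,\dots,M-1\}$ the set of pairs $(i,j)\in\alphabet^2$ with $(i-j)\bmod M=r$ has exactly $M$ elements, the $R$-triangles among them all fall in $C^L_k\cup C^R_k$ for the appropriate $k$, and summing $q_jp_i$ over that class gives $\gam{k}$. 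Everything else — the matrix product interpretation, linearity of expectation, the identity $\vc{e}=\evec$ — is immediate. I would present the argument as: (i) reduce to the claim $\ex Z^R(k)=\gam{k}$, $\ex Z^L(k)=\gam{k-1}$; (ii) expand $Z^R(k)$ as a sum of indicators $\1\{Q_{i,j}\subseteq\Lambda^1\}$ over the pairs whose $R$-triangle lies in the column; (iii) take expectations using independence and the marginals; (iv) recognize the cyclic sum.
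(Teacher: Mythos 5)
Your approach is essentially the paper's own: compute $\prob{Q_{i,j}\subseteq\Lambda^1}=p_iq_j$ by independence of $F_1$ and $F_2$, then identify the two column sums $\ex Z^L(k)$, $\ex Z^R(k)$ of $\exmat{k}$ with correlation coefficients by bookkeeping. But your bookkeeping has a concrete index error in the $L$-component: you conclude $\ex Z^L(k)=\gam{k-1}$, whereas the correct identity is $\ex Z^L(k)=\gam{k+1}$, so that $\gamvec{k}=(\gam{k+1},\gam{k})$, not $(\gam{k-1},\gam{k})$. By \eqref{eq:triangles}, the level-$1$ triangle $L_{i,j}$ lies in column $C_1(i-j-1)$, one step to the \emph{left} of $R_{i,j}$'s column $C_1(i-j)$; hence the pairs $(i,j)$ whose $L$-triangle falls in $C^L_k$ or $C^R_k$ are precisely those with $i-j-1\in\{k-M,k\}$, i.e.\ $i-j\equiv k+1\pmod M$, and summing $p_iq_j$ over that residue class yields $\gam{k+1}$. (Your $R$-computation $\ex Z^R(k)=\gam{k}$ is correct.) A sanity check against \eqref{eq:2-adic exmat expr}: the column sums of $\exmat{0}$ there are $2\alpha\beta=\gam{1}$ and $\alpha^2+\beta^2=\gam{0}$, confirming $\gamvec{0}=(\gam{1},\gam{0})$. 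Since $k+1\equiv k-1\pmod 2$, the error is invisible when $M=2$, but for $M\ge3$ the two values are genuinely different; your formula would also contradict the recursion \eqref{eq:one step gamma recursion with m_e}, which forces $\gamvec[(n)]{k}=(\gamn{k+1},\gamn{k})$. You correctly flagged the index arithmetic as the main obstacle, and that is exactly where the slip occurred; the fix is local --- re-derive the shift direction from the $(i-j-1)$ in the definition of $L_{i,j}$.
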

%%%\index{correlation coefficient!relation with M(k_n)@relation to $M(\tnks)$}%

\begin{proof}
As in \cite{DS} this follows from \eqref{eq:triangles} with $n=1$,
$$ \prob{Q_{i,j} \subseteq \Lambda^1}
  = \prob{I_i \subseteq F^1_1, I_j \subseteq F^1_2}
  = \Cprob{I_i \subseteq F^1_1}\Cprol{I_j \subseteq F^1_2}
  = p_iq_j $$
 and some careful bookkeeping. % BK: please insert the word 'and' and a . if you remove this insertion
\end{proof}

%%_%%The sum of the correlation coefficients $\gam{k}$ over $k\in\alphabet$ can be written in various ways,
%%_%% depending on whether the definition in \eqref{eq:gamma_k def} is considered
%%_%% or the equivalent expression in \lemref{exmat sum to gamma}:
%%_%%\begin{align} \eqlbl{gamma_k sum}
%%_%%    \sum_{k\in\alphabet} \gam{k}
%%_%% &= \sum_{i\in\alphabet} p_i \sum_{j\in\alphabet} q_j
%%_%%  = \norm{\vc{p}}_1 \norm{\vc{q}}_1 \notag\\
%%_%% &= \sum_{k\in\alphabet} \ex Z^V(k)
%%_%%  = \sum_{k\in\alphabet} \sum_{U\in\set{L,R}} \ex Z^{UV}(k) \notag\\
%%_%% &= \ex \left[\setcard{\set{(i,j)\in\alphabet^2:Q_{i,j}\subseteq\Lambda^1)}}\right],
%%_%%\end{align}
%%_%%for all $V\in\set{L,R}$.

A property that in general holds \emph{only} for the symmetric case (i.e., $\vc{p}=\vc{q}$),
 is that $\gam{0}$ is the largest of the auto-correlation coefficients:
\begin{equation}  \eqlbl{CS}
 0\le\gamma_k\le \gamma_0.
 \end{equation}
This follows easily with the Cauchy-Schwarz inequality.
\section{The basic result with joint survival distributions} \seclbl{Joint survival}

In this section we generalize Theorem~\ref{thm:DS} of \cite{DS} to joint survival distributions, \emph{and}
to the asymmetric case.

In the setting of general joint survival distributions $\mu$ and $\lambda$,
 the condition stated below is sufficient for the theorem to hold.

For a joint survival distribution $\mu:2^\alphabet\to[0,1]$
 we define its \kw{marginal support}:
\begin{align} \eqlbl{msupp def}
 \msupp{\mu} := \Union \set{S\subseteq\alphabet:\mu(S)>0}.
\end{align}
%%%\index{marginal support}%
%%%\index{joint survival distribution!marginal support}%
In other words, the marginal support is the set of $i\in\alphabet$ for which it holds that $p_i=\Cprob{X_i = 1}>0$.
For example, take $M=4$ and $\mu$ defined by $\mu(\{0,3\})=\mu(\{1,3\})=\mu(\{3\})=\frac{1}{3}$,
 then the marginal support is $\msupp{\mu}=\{0,1,3\}$.

\begin{cnd} \cndlbl{joint event condition}
A joint survival distribution $\mu:2^\alphabet\to[0,1]$ satisfies the \kw{joint survival condition (JSC)}
 if it assigns a positive probability to its marginal support: $\mu(\msupp{\mu})>0$.
%%%\index{joint survival condition}%
\end{cnd}

In the `independent interval' case $\mu$
 satisfies the joint survival condition since in that case $\mu(\msupp{\mu})=\prod_{i\in\msupp{\mu}} p_i>0$.
In the example above, where $\msupp{\mu}=\{0,1,3\}$, the JSC is not satisfied.

\begin{thm} \label{thm:central}
Consider two independent random Cantor sets $F_1$ and $F_2$ whose
 joint survival distributions satisfy \cndref{joint event condition}, the joint survival condition.
\begin{enumerate}
 \item[(a)] If  $\gam{k}>1$ for all $ k\in\alphabet$, then $F_1-F_2$ contains an interval a.s.\ on $\set{F_1-F_2\neq\es}$.
 \item[(b)] If  $\gam{k},\gam{k+1}<1$ for some $ k\in\alphabet$, then $F_1-F_2$ contains no interval a.s.
\end{enumerate}
\end{thm}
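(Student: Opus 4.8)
The plan is to run the two parts along the lines of \cite{DS}, but replacing the independent-interval arguments with ones that survive the move to joint survival distributions under \cndref{joint event condition}.

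For part (b), I would use the characterization \eqref{eq:interval not in projection iff triangle counts zero}: the interval $I^U_\tnks$ is absent from $\phi(\Lambda^n)$ exactly when $Z^{UL}(\tnks)=Z^{UR}(\tnks)=0$, so $F_1-F_2$ contains no interval iff for every $\tnks$ there is some $n$ with no level-$n$ triangles in $C^U_\tnks$. By the self-similarity of the construction and the branching structure of the triangle counts $Z^{UV}(\tnks)$, it suffices to show that along a suitable fixed path $k_1,k_2,\dots$ the two-type process dies out a.s. The key quantitative input is \lemref{exmat sum to gamma}: $\vc e\,\exmat{k}=\gamvec{k}$, so if $\gam k,\gam{k+1}<1$ then along the periodic path $k,k+1,k,k+1,\dots$ the product of expectation matrices has row sums bounded by a product of factors each $<1$; hence $\ex[\vc e\,\exmat{\tnk}]\to 0$ geometrically, and the total triangle count $Z^L(\tnk)+Z^R(\tnk)\to 0$ in expectation, forcing extinction a.s. One has to be a little careful that the relevant column $C^U_\tnks$ realizing this path actually occurs with positive conditional probability given $\Lambda^n\ne\es$ in that region, and then use a Borel--Cantelli / zero--one argument over all dyadic-type columns to conclude that a.s.\ \emph{no} interval survives. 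The asymmetry (distinct $\vc p,\vc q$) does not affect this since \lemref{exmat sum to gamma} already covers $\gam k=\sum_i q_ip_{i+k}$.

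For part (a), the heart is a first-moment-to-survival argument for the two-type branching process in a varying environment. Assuming $\gam k>1$ for all $k$, every expectation matrix $\exmat{k}$ has both row sums equal to $\gam k>1$ by \lemref{exmat sum to gamma}. The strategy is: on $\set{F_1-F_2\ne\es}$ some column $C^U_\tnks$ contains triangles at every level; restricting to the subtree of triangles generated inside a single surviving level-$0$ triangle, the counts $Z^{\bullet V}$ form a supercritical-in-environment two-type process, and one shows that with positive probability this process survives \emph{and} fills an entire subinterval --- i.e.\ the surviving triangles are not only nonempty but their $\phi$-images cover some $I_n(k)$ for all large $n$, which by compactness gives an interval in $F_1-F_2$. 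The role of \cndref{joint event condition} is exactly here: to guarantee that with positive probability a block of adjacent $M$-adic intervals all survive simultaneously (in the independent case this is $\prod_{i\in\msupp\mu}p_i>0$), which is what lets aligned triangles propagate coherently and prevents the ``thinning to measure zero'' that would otherwise destroy the interval. Then a self-similarity/zero--one argument upgrades ``positive probability'' to ``a.s.\ on $\set{F_1-F_2\ne\es}$''.

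The main obstacle I expect is part (a): controlling the \emph{interaction} between aligned triangles (flagged in the text after \eqref{eq:exmat def}) well enough to get an honest survival-with-an-interval statement rather than mere nonextinction in expectation. A pure first-moment computation gives $\ex[\text{triangle counts}]\to\infty$, but one needs a genuine lower bound on the survival probability of the coupled process and, more delicately, on the event that the survivors tile a full interval; this is where \cndref{joint event condition} must be invoked carefully, and where the argument of \cite{DS} has to be adapted rather than quoted. Part (b), by contrast, is essentially a clean first-moment bound plus Borel--Cantelli and I expect it to be routine.
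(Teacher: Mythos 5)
Your overall framework matches the paper's: part (b) is a first--moment argument and part (a) is a supercritical-branching argument in which the joint survival condition is used to get positive probability of joint survival of aligned squares. But there are two concrete issues.

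For part (b), the choice of path is wrong. You propose the periodic path $k,k+1,k,k+1,\dots$, but the hypothesis $\gam{k},\gam{k+1}<1$ controls only the column sums of the single matrix $\exmat{k}$ (which are exactly $\gam{k+1}$ and $\gam{k}$, by \lemref{exmat sum to gamma} and \eqref{eq:exmats with m_e entries}). It says nothing about $\gam{k+2}$, which enters as a column sum of $\exmat{k+1}$, so $\onenorm{\exmat{k}\exmat{k+1}}$ need not be $<1$ and the expectation along the periodic path need not decay. The argument goes through cleanly if you instead follow the constant path $k,k,k,\dots$: then $\onenorm{\exmat{k}^n}\le\max(\gam{k},\gam{k+1})^n\to0$, so every $M$-adic interval $I^U_\tnks$ has a subcolumn $\tnks k\cdots k$ whose expected triangle count tends to zero, whence $\prob{I^U_\tnks\subseteq\phi(\Lambda)}=0$, and a countable union finishes.

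For part (a), you have correctly identified the obstacle (dependence between aligned triangles makes a naive second-moment/survival argument fail), but you leave it unresolved; your plan at that point is not a proof but a statement of the problem. The paper's resolution has two specific ingredients that your sketch is missing. First, one works not with arbitrary triangles but with \emph{$\Delta$-pairs}: a left and a right triangle in the same column that are \emph{unaligned}, hence have independent offspring; \lemref{Delta-pair wpp} shows such a pair occurs with positive probability (using $\gam{0}>1$ and $\gam{1}>1$ to find two unaligned surviving diagonal squares and one subdiagonal square). Second, to propagate growth through all sublevels one needs the combinatorial grouping result (\lemref{grouping}): among the surviving triangles in a column one can always extract a constant fraction of mutually unaligned $\Delta$-pairs, colored so that the branching process restricted to each color class is genuinely independent. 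It is this lemma, combined with the JSC-driven event $J_n$ (all positive-probability squares survive jointly) in \lemref{finite exponential growth}, that turns the expectation bound $\gamma^n$ into an almost-sure exponential lower bound on actual counts (\lemref{exponential growth}) and hence into an interval. Without some substitute for the $\Delta$-pair/grouping machinery, "one needs a genuine lower bound on the survival probability of the coupled process" remains a gap rather than a step.
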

%%%\index{theorem!algebraic difference}%
\begin{proof}
A proof for the symmetric case and `independent interval' Cantor sets is given in \cite{DS}.
An extension of the  proof to the asymmetric case and general survival distributions
 satisfying the joint survival condition is easy for part (b) (where, moreover, the JSC is not needed), but for part (a) several complications arise.
 We will discuss these in \secref{general proof}.
\end{proof}

%%_%%Theorem \ref{thm:central} is in general not conclusive for all pairs $(\vc{p},\vc{q})$.
%%_%%Some $\gam{k}$ might be larger than 1 while others are smaller than 1, though never two consecutively.
%%_%%Another gap, which is not uncommon in branching process theorems, has to do with the boundary case $\gam{k}=1$.
%%_%%Consider for example the symmetric $M=3$ case:
%%_%% is this case we always have $\gam{0}\ge\gam{1}=\gam{2}$,
%%_%% so it follows that the theorem is conclusive for both $\gam{1}>1$ and $\gam{1}<1$;
%%_%% it is still not conclusive, however, for the boundary case $\gam{1}=1$.
%%_%%

%%%Het volgende is erbij gekomen naar aanleiding van e-mail aan Bram op 7/10/2007:
The joint survival condition is not a necessary condition. Here is an example where the JSC does not hold, but where
$F_1-F_2$ contains an interval: take $M=5$, and $\mu(\{0,1,2,3\})=p=1-\mu(\{1,2,3,4\})$ where $p$ is arbitrary between 0 and 1.
Then $\gamma=3$ for all $p$, but the JSC does not hold. However,  any  realisation $\Lambda^n$ contains the \emph{deterministic}
set $\tilde{\Lambda}^n$ generated by $\tilde{\mu}$ defined by $\tilde{\mu}(\{1,2,3\})=1$. A simple geometric analysis shows that
$\tilde{F}=[-1/2,1/2]$, and hence $F$ must contain this interval. (An algebraic alternative is to use Theorem 2 of \cite{DS}:
the collection of reduced matrices of $\tilde{\mu}$ is $\{T_3,T_9,T_{12}\}$, and since this set is closed under (mutual)
multiplications, this theorem tells us that $\tilde{F}$ will contain an interval.)

% \begin{lem} \lemlbl{marginal probability sum > k implies event on k+1 coordinates with positive probability}
% Let $\bP$ be a probability measure on $\integers^d$, $d\ge1$, and $(X_1,\dots,X_d) \sim \bP$.
% If for some $n\in\integers$ and $k\in\numbers$
%  \[ \sum_{i=1}^d \bP(X_i = n) > k, \]
%  then there exist $k+1$ distinct integers $1\le i_1 < \dots < i_{k+1} \le d$ such that
%  \[ \bP(X_{i_1} = \dots = X_{i_{k+1}} = n)>0. \]
% \end{lem}
% \begin{proof}
% A proof by contraposition: if for all $1\le i_1 < \dots < i_{k+1} \le d$ we have
%  $\bP(X_{i_1} = \dots = X_{i_{k+1}} = 1)=0$, then
% \begin{align*}
%     \sum_{i=1}^d \bP(X_i=n)
%   = \sum_{i=1}^d \sum_{\vc{x}\in\integers^d:x_i = n} \bP(\vc{x})
%  &= \sum_{\vc{x}\in\integers^d:\bP(\vc{x}) > 0} \bP(\vc{x}) \cdot \abs{\set{i:x_i=n}}
% \le k,
% \end{align*}
% where the interchange of sums is justified by the non-negativity of $\bP(\vc{x})$.
% \end{proof}

% -----------------------------------
%  Chapter: Higher order Cantor sets
% -----------------------------------
\section{Higher order Cantor sets} \seclbl{higher order Cantor sets}

%%_%%\subsection{Definition}
From now on  we restrict ourselves to the symmetric case,
 that is the vectors of marginal probabilities satisfy $\vc{p}=\vc{q}$.
Whether similar results can be obtained for $\vc{p}\neq\vc{q}$ is yet unknown.

Essentially, the \kw{$n$-th order random Cantor set} is constructed by `collapsing' $n$ steps
 of its construction into one step.
%%%\index{higher order Cantor set}%
We denote all entities of an $n$-th order random Cantor set with a superscript $^{(n)}$.

The alphabet $\alphabet^{(n)}$ of the $n$-th order random Cantor set is  $\{0,\dots,M^n\!-1\}$,
with elements $i^{(n)}=\radix{\tn}$. To reduce the overloaded notation we will omit the superscript here, and simply write
$i=i^{(n)}$.

The joint survival distribution $\mu^{(n)}:2^{\alphabet^{(n)}}\to[0,1]$, which by definition is the distribution of the sets
\begin{align*}
  \set{i_{m+1}\in\alphabet^{(n)}:X^{(n)}_{\tm i_{m+1}}=1}
\end{align*}
for all $\tm\in\tree^{(n)}$,
 is determined uniquely by requiring that
\begin{align*}
  X^{(n)}_i \sim \prod_{d=1}^n X_{i_1\dots i_d}
               = X_{i_1} X_{i_1i_2} \cdots X_{i_1\dots i_n},
\end{align*}
for all $i=\radix{\tn}\in\alphabet^{(n)}$, where the $X_{i_1\dots i_d}$ are defined in \eqref{eq:X vector sim mu}.

%%%\index{joint survival distribution!higher order}%
It is clear that the higher order marginal probabilities are given by
\begin{align}
  p_i^{(n)} := \prob[\bP_{\mu^{(n)}}]{X^{(n)}_i=1}
             = \prod_{d=1}^n \prob[\bP_\mu]{X_{i_1\dots i_d}=1}
             = \prod_{d=1}^n p_{i_d},
\end{align}
for all $i=\radix{\tn}\in\alphabet^{(n)}$.
%%%\index{vector of marginal probabilities!higher order}%

% ---------------------------
%  Section: Bounded skewness
% ---------------------------
\subsection{Joint survival.}
Note that when $\mu$ describes an  `independent interval' Cantor set,
 it does \emph{not} hold in general that $\mu^{(n)}$ corresponds to an  `independent interval' Cantor set with marginals $p^{(n)}_i$.
This is because e.g.\ the products $X_{0}X_{00}$ and $X_{0}X_{01}$ are not independent: they share the $X_0$ term.

However, the joint survival condition (\cndref{joint event condition}) nicely propagates to higher order Cantor sets.
Suppose $\mu$ satisfies the JSC. We have
\begin{align*}
  \msupp{\mu^{(n)}} &= \Big\{i=\radix{\tn}\in\alphabet^{(n)}: p^{(n)}_i = \prod_{d=1}^n p_{i_d} > 0\Big\},
\end{align*}
which implies that $\mu^{(n)}$ satisfies the joint survival condition as well:
\begin{align*}
     \mu^{(n)}\left(\msupp{\mu^{(n)}}\right)
 & = \left(\mu\left(\msupp{\mu}\right)\right)^{1+a+a^2+\dots+a^{n-1}} > 0,
\end{align*}
where $a:=\setcard{\msupp{\mu}}$ is the cardinality of the marginal support of $\mu$.

%%%\index{joint survival condition!higher order propagation}%

The key observation regarding higher order Cantor sets is that for all $n\ge1$
\begin{align}
  F^{(n)} \sim \Intersec_{m=1}^\infty F^{n\cdot m}  = F,
\end{align}
 hence statements such as Theorem \ref{thm:central}
 can be applied to higher order correlation coefficients $\gamn{k}$ in order to get results
 not only for $\smash{F_1^{(n)}-F_2^{(n)}}$, but for $F_1-F_2$ as well.
%%_%%In particular, Theorem \ref{thm:central} implies that when for some $n\ge1$ we have
%%_%%\begin{itemize}
%%_%% \item $\gamnk>1$ for all $k\in\alphabet^{(n)}$, then we are in the `intervals' case, whereas when
%%_%% \item $\gamnk,\gamn{k+1}<1$ for some $k\in\alphabet^{(n)}$, then we are in the `no intervals' case.
%%_%%\end{itemize}
%%_%%In the following sections, the structure of the \hogams{} is described further.

\subsection{Expectation matrices}
The expectation matrices of the higher order Cantor sets satisfy the following factorization property:
 for all $\tnk\in\tree$:
\begin{align} \eqlbl{ho exmat expansion}
 \exmat[(n)]{\radix[M]{\tnk}}
  = \exmat{\tnk}
  = \exmat{k_1} \cdots \exmat{k_n}.
\end{align}

%%%\index{expectation matrix!higher order}%

For the $\gamnk$ we can use \lemref{exmat sum to gamma},
 which relates the $\gamnk$ to the expectation matrices.
Recall that $\vc{e}:=\evec$.
For all $\tnk\in\tree$:
\begin{align} \eqlbl{gamma as matrix product}
    \gamvec[(n)]{k}
  = \vc{e} \exmat[(n)]{k}
  = \vc{e} \exmat{k_1} \cdots \exmat{k_n},
\end{align}
where $k=\radix[M]{\tnk}$.
%%%\index{higher order Cantor set!zero-th order}%
%%%\index{correlation coefficient!higher order}%
We define $ \gam[(0)]{k}= 1$ for all $k\in\integers$.
%%%\index{higher order Cantor set!correlation coefficient!recursion relation}%
A direct consequence of \eqref{eq:gamma as matrix product} is the following recursive relation between
 $\gamnk$ of different orders, written as a matrix multiplication:
\begin{align} \eqlbl{gamma recursion}
  \gamvec[(n)]{M^{n-m}k+l} = \gamvec[(m)]{k} \exmat[(n-m)]{l},
\end{align}
for all $0\le m \le n$, $k\in\alphabet^{(m)}$ and $l\in\alphabet^{(n-m)}$.
In fact we can take $k\in\integers$ here because, by definition,
 $\gam[(m)]{k+M^mi}=\gam[(m)]{k}$ for all $i\in\integers$, $m\ge0$ and $k\in\integers$.
Note that in particular
\begin{align} \eqlbl{one step gamma recursion}
  \gamvec[(n+1)]{Mk+l} = \gamvec[(n)]{k} \exmat{l},
\end{align}
for all $n\ge0$, $k \in \integers$ and $l\in\alphabet$.

% \begin{equation} \eqlbl{gamma recursion}
%    \lvec{\gam[(n  )]{\radix{\tnk        }+1}}{\gam[(n  )]{\radix{\tnk        }}} \exmat{k_{n+1}}
%  = \lvec{\gam[(n+1)]{\radix{\tnk k_{n+1}}+1}}{\gam[(n+1)]{\radix{\tnk k_{n+1}}}}.
% \end{equation}

% Actually, the $\exmat[(m-n)]{l}$ define functions $f_{n-m,l}: [0,\infty)^2 \to [0,\infty)$,
%  for all $0\le m\le n$ and $0\le l \le M^{n-m}$, such that for all $0\le k<M^m$
% \begin{align} \eqlbl{f def prop}
%   f_{n-m,l}\left(\gam[(m)]{k},\gam[(m)]{k+1}\right) &= \gamn{M^{n-m}k+l}.
% \end{align}

% ----------------------------------
%  Section: an alternative notation
% ----------------------------------
\subsection{An alternative notation} \seclbl{an alternative notation}
In \eqref{eq:exmat def} the expectations $\ex Z^{UV}(\tnks)$ are put into the matrices $\exmat{\tnks}$.
In this section is discussed another way to alias these expectations
 in such a way that equivalent entries get the same alias.
This aliasing scheme also provides an alternative representation of recursion relation \eqref{eq:one step gamma recursion}.

The expectation matrices $\exmat{k}$ together contain $4M$ entries, but not all entries are distinct.
The number of left triangles in column $k$ for example equals the number of right triangles in column $k+1$.
The reason is simple: the left triangles in column $k$ and the right triangles in column $k+1$
 are the respective halves of the same $M$-adic squares --- squares that have the same projection under $\phi$.
The following equations point out which triangle count expectations are always equal to each other,
 and define the aliasing scheme $m_e$, $e\in\{-M,\dots,M\}$, for those $e$ satisfying $\abs{e}<M$:
\begin{align*}
  \begin{alignedat}{3}
    m_{k-M} &:= \ex Z^{LL}(k-1) &&= \ex Z^{LR}(k),  &&\quad 0<k<M, \\
    m_0     &:= \ex Z^{LL}(M-1) &&= \ex Z^{RR}(0),  &&\\
    m_k     &:= \ex Z^{RL}(k-1) &&= \ex Z^{RR}(k),  &&\quad 0<k<M.
  \end{alignedat}
\end{align*}
The remaining two aliases, $m_e$ with $\abs{e}=M$, are given by
\begin{align*}
 m_{-M} &:= \ex Z^{LR}(0)   = 0, &
 m_M    &:= \ex Z^{RL}(M-1) = 0.
\end{align*}
These expectations are always zero because there are no right triangles in the leftmost column
 and no left triangles in the rightmost column.
%%%\index{m_e@$m_e$!algebraic difference}%

The  matrices $\exmat{k}$ take their entries from these $m_e$ as follows:
\begin{align} \eqlbl{exmats with m_e entries}
 \exmat{k} = \lmat{m_{k+1-M}}{m_{k-M}}{m_{k+1}}{m_k}
\end{align}
for $k\in\{0,\dots,M-1\}$.
%%%\index{higher order Cantor set!correlation coefficient!recursion relation}%
Consequently, recursion relation \eqref{eq:one step gamma recursion} can be written using the $m_e$ as
\begin{align} \eqlbl{one step gamma recursion with m_e}
  \gam[(n+1)]{Mk+l} = m_{l-M} \gamn{k+1} + m_l \gamn{k}
\end{align}
for any $n\ge0$, $k\in\integers$ and $l\in\{0,\dots,M\}$.
%%%\index{higher order Cantor set!correlation coefficient!recursion relation}%
Note in particular that the case $l=M$ is allowed and valid here;
 it corresponds to the \emph{left} column of \eqref{eq:one step gamma recursion} with $l=M-1$ there.
%%_%%See \figref{gamma recursion} for an interpretation in the form of a diagram.

Relation \eqref{eq:gamma recursion} can also be written in terms of the $m_e$,
 though the higher order version $m^{(n-m)}_e$ is required
 as we are dealing with entries from $\exmat[(n-m)]{l}$ instead of $\exmat{l}$.
The equation that relates the $n$-th order to the $m$-th order is:
\begin{align} \eqlbl{gamma recursion with m_e}
  \gamn{M^{n-m}k+l} &= m^{(n-m)}_{l-M^{n-m}} \gam[(m)]{k+1} + m^{(n-m)}_l \gam[(m)]{k},
\end{align}
for all $0\le m\le n$, $k\in\integers$ and $l\in\{0,\dots,M^{n-m}\}$.

An explicit formula for the $m_e$ can also be given; some careful bookkeeping yields the following set of formulas:
\begin{align} \eqlbl{explicit m_e formula}
 m_e = \sum_{i,j\in\alphabet: i-j = e} p_ip_j
     = \sum_{i=\max(0,e)}^{\min(M,M+e)-1} p_ip_{i-e}
     = \sum_{j=\max(0,-e)}^{\min(M,M-e)-1} p_{j+e}p_j,
\end{align}
for $e\in\{-M,\dots,M\}$.
%%%\index{m_e@$m_e$!formula}%
For $\abs{e}=M$, these are sums over empty sets, which by convention evaluate to $0$.

From equation \eqref{eq:explicit m_e formula} it is immediately clear that
 $m_e=m_{-e}$ for all $e\in\{-M,\dots,M\}$, so $m_0, \dots, m_M$ is all we need to encode the matrices.

%%%\index{m_e@$m_e$!symmetric case}%

% ---------------------------
%  Section: Bounded skewness
% ---------------------------
\subsection{Bounded skewness}

The scope of \thmref{central} can be extended if eventually for some $n$ all \hogams{} are strictly greater than 1,
 or when two subsequent ones, say $\gamn{k}$ and $\gamn{k+1}$, are strictly less than 1.
If it were possible that the ratio between two successive \hogams{} could become arbitrarily large as $n\to\infty$,
 it could be hard to show that we would eventually end up in one of these cases;
 perhaps one \hogam{} would always remain below $1$, while all others were already above $1$.

The next lemma shows that the scenario of unbounded neighbor ratio
 is not the case if all $m_e$ are positive for $\abs{e}<M$.
In the next section this lemma will play a key role in proving that
 --- except for a (topologically) inconsiderable set of marginal probabilities $\vc{p}$ ---
 \thmref{central} can always be fruitfully applied to higher order Cantor sets for orders $n$ that are large enough.

Suppose that $m_e>0$ for $\abs{e}<M$, then
\begin{align} \eqlbl{R def}
  R := \min_{\substack{0<k<M \\ k'\in\{k-1,k+1\}}} \frac{\min(m_{k-M},m_k)}{\max(m_{k'-M},m_{k'})}
\end{align}
is well defined, and $R\in(0,1]$.

\begin{lem} \lemlbl{bounded skewness}
If $m_e>0$ for $e=1,\dots,M-1$, then
\begin{align} \eqlbl{bounded skewness}
     \frac{\min(\gamn{k},\gamn{k+1})}{
           \max(\gamn{k},\gamn{k+1})} \ge R,
\end{align}
for all $n\ge0$ and $k\in\integers$, where $R$ is defined as in \eqref{eq:R def}.
\end{lem}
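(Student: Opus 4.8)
The plan is to prove the inequality \eqref{eq:bounded skewness} by induction on $n$, using the one-step recursion \eqref{eq:one step gamma recursion with m_e} as the engine. The base case $n=0$ is trivial since $\gam[(0)]{k}=1$ for all $k$, so the ratio equals $1\ge R$. For the inductive step, fix $n\ge0$ and suppose the bound holds at level $n$; I want to deduce it at level $n+1$ for an arbitrary pair of neighbors $\gam[(n+1)]{j},\gam[(n+1)]{j+1}$.

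First I would reduce to a canonical form for the index $j$. Write $j=Mk+l$ with $k\in\integers$ and $l\in\{0,\dots,M-1\}$. There are two cases. If $l<M-1$, then both $j$ and $j+1$ lie "in the same block", i.e.\ $j+1=Mk+(l+1)$ with $l+1\le M-1$, and by \eqref{eq:one step gamma recursion with m_e} we have $\gam[(n+1)]{j}=m_{l-M}\gamn{k+1}+m_l\gamn{k}$ and $\gam[(n+1)]{j+1}=m_{l+1-M}\gamn{k+1}+m_{l+1}\gamn{k}$. If $l=M-1$, then $j+1=M(k+1)+0$, so $\gam[(n+1)]{j}=m_{-1}\gamn{k+1}+m_{M-1}\gamn{k}$ and $\gam[(n+1)]{j+1}=m_{-M}\gamn{k+2}+m_0\gamn{k+1}=m_0\gamn{k+1}$ (using $m_{-M}=0$); here I would exploit the remark after \eqref{eq:one step gamma recursion with m_e} that $l=M$ is allowed, rewriting $\gam[(n+1)]{j}=m_0\gamn{k+1}+m_{M-1}\gamn{k}$ as well, so that both entries of the pair are nonnegative combinations of the \emph{same} two values $\gamn{k},\gamn{k+1}$ — only the coefficients differ. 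Thus in every case the problem becomes: given $x=a_1u+a_2v$ and $y=b_1u+b_2v$ with $u,v\ge0$ and the coefficient pairs $(a_1,a_2),(b_1,b_2)$ drawn from the $m_e$ in the specific adjacent-column pattern appearing in \eqref{eq:exmats with m_e entries}, bound $\min(x,y)/\max(x,y)$ from below.

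The key elementary estimate is this: if $a_i\ge R\,b_i$ for $i=1,2$ then $x=a_1u+a_2v\ge R(b_1u+b_2v)=Ry$, and symmetrically with the roles swapped; hence $\min(x,y)\ge R\max(x,y)$ provided that for each coordinate $i$ the ratio $\min(a_i,b_i)/\max(a_i,b_i)\ge R$. So I would check that the two coefficient vectors arising in each case — which from \eqref{eq:exmats with m_e entries} are, up to the index shift, $(m_{l-M},m_l)$ and $(m_{l'-M},m_{l'})$ for neighboring $l,l'$ — satisfy $\min(m_{l-M},m_l)/\max(m_{l'-M},m_{l'})\ge R$ coordinatewise, which is \emph{exactly} what the definition \eqref{eq:R def} of $R$ guarantees (the minimum there ranges over precisely the pairs $k'\in\{k-1,k+1\}$ of neighboring column indices). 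The positivity hypothesis $m_e>0$ for $1\le e\le M-1$ is what makes all these $m$'s strictly positive, so $R>0$ and no division-by-zero or degenerate-combination issues arise; and $R\le 1$ is immediate since each fraction in \eqref{eq:R def} has the form $\min(\cdot)/\max(\cdot)$ with overlapping arguments.

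I would therefore organise the write-up as: (i) base case; (ii) the elementary lemma that $x/y$-type ratios are controlled coordinatewise; (iii) the case split on $l$ and the translation of each case into the coordinatewise comparison, invoking \eqref{eq:one step gamma recursion with m_e} and \eqref{eq:exmats with m_e entries} and the inductive hypothesis on $\gamn{k},\gamn{k+1}$ only insofar as it is not even needed — note the coordinatewise argument bounds $\min(x,y)/\max(x,y)$ purely from the coefficients, so actually the induction hypothesis is not required at all and $n$-independence is automatic. The main obstacle I anticipate is purely bookkeeping: handling the wrap-around at block boundaries ($l=M-1$, where one of the new neighbors sits in the next block and a $0=m_{-M}$ coefficient appears) cleanly, and making sure the index set over which $R$ is minimised in \eqref{eq:R def} genuinely covers every coefficient pair $(m_{l-M},m_l)$ vs.\ $(m_{l'-M},m_{l'})$ that the recursion throws up — including the boundary case — so that the coordinatewise bound is always licensed by the definition of $R$ and not by a near-miss. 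Once the boundary case is absorbed by the "$l=M$ allowed" trick, everything else is a one-line monotonicity argument.
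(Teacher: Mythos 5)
Your coordinatewise argument works only when neither $j$ nor $j+1$ is a multiple of $M$ (equivalently $l\in\{1,\dots,M-2\}$): then both coefficient pairs are entirely positive and \eqref{eq:R def} grants the two-sided bound with no induction needed, exactly as in the paper's first case ($M\nmid k$). But there are two boundary cases, $l=0$ and $l=M-1$, and there the argument fails. Take $l=M-1$, so $j+1=M(k+1)$: after the ``$l=M$'' rewriting, $\gam[(n+1)]{j}=m_1\gamn{k+1}+m_{M-1}\gamn{k}$ but $\gam[(n+1)]{j+1}=m_0\gamn{k+1}+0\cdot\gamn{k}$. The coefficient pair for $j+1$ is $(m_0,0)$, so your check $\min(m_{M-1},0)/\max(m_{M-1},0)\ge R$ reads $0\ge R$, which is false. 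Equivalently, the bound $\min(m_{l-M},m_l)/\max(m_{l'-M},m_{l'})\ge R$ from \eqref{eq:R def} holds only when the numerator index $l$ lies in $\{1,\dots,M-1\}$; when the numerator index is $0$ or $M$ the numerator pair contains a zero. You thus get $\gam[(n+1)]{j}\ge R\,\gam[(n+1)]{j+1}$ for free but not the reverse, and the reverse genuinely cannot be read off the coefficients alone, because $\gam[(n+1)]{j+1}$ does not see $\gamn{k}$ at all.

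Consequently, your remark that the induction hypothesis ``is not required at all'' is wrong: it is precisely what closes the boundary case. The paper pairs the induction hypothesis with the Cauchy--Schwarz fact \eqref{eq:CS} in the form $m_e+m_{e-M}=\gam{e}\le\gam{0}=m_0$ --- an ingredient your proposal never invokes --- to obtain $\gam[(n+1)]{j}\le m_0\max(\gamn{k},\gamn{k+1})$, and then $\gam[(n+1)]{j+1}/\gam[(n+1)]{j}\ge\min(1,\gamn{k+1}/\gamn{k})\ge R$ by the induction hypothesis. Without $\gam{e}\le\gam{0}$ there is no way to compare the single-term combination $m_0\gamn{k+1}$ to the two-term one, so the block boundary cannot be handled. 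Note also that your case split ``$l<M-1$ versus $l=M-1$'' sends the $l=0$ boundary case into the supposedly easy branch, where the coordinatewise check fails for the same reason (there the coefficient pair for $j$ is $(0,m_0)$).
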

\begin{proof}
Assume that $m_e>0$ for all $\abs{e}<M$.
Note that by applying a simple inductive argument over $n$
 to recursion relation \eqref{eq:one step gamma recursion with m_e}
 with initial conditions $ \gam[(0)]{k}= 1$ for all $k\in\integers$
 --- and using that for all $l\in\{0,\dots,M\}$ it holds that $m_{l-M}$ and $m_l$ are not both equal to zero ---
 it follows that $\gamn{k}>0$ for all $n\ge0$ and $k\in\integers$.
Hence the fraction in \eqref{eq:bounded skewness} is well defined.

An equivalent formulation of \eqref{eq:bounded skewness} is that
 for all $n\ge0$ and all neighbor indices $k,k'\in\integers$ --- i.e., $\abs{k-k'}=1$ ---
 it holds that $\gamn{k} / \gamn{k'} \ge R$.
We will prove this version, as it turns out to be slightly more convenient to work with.

We will use induction over $n$.
For $n=0$ it holds that $\gam[(0)]{k} / \gam[(0)]{k'} = 1/1 = 1 \ge R$.
Now assume that the lemma holds for an $n\ge0$.
Let $k,k'\in\integers$ with $\abs{k-k'}=1$ be arbitrary.

First assume that $M$ does \emph{not} divide $k$,
 then we can write $k=Mk_1+k_2$ for some $k_1\in\integers$ and some $0<k_2<M$.
If we define $k'_2$ by setting $k'_2-k_2=k'-k$, then $k'=Mk_1+k'_2$ with $0\le k'_2\le M$.
Using recursion relation \eqref{eq:one step gamma recursion with m_e}, we obtain:
\begin{align*}
     \frac{\gam[(n+1)]{k}}{\gam[(n+1)]{k'}}
   = \frac{\gam[(n+1)]{Mk_1+k_2}}{\gam[(n+1)]{Mk_1+k'_2}}
  &= \frac{m_{k_2-M}\gamn{k_1+1} + m_{k_2}\gamn{k_1}}{
           m_{k'_2-M}\gamn{k_1+1} + m_{k'_2}\gamn{k_1}} \\
&\ge \frac{\min(m_{k_2-M},m_{k_2}) \left(\gamn{k_1+1} + \gamn{k_1}\right)}{
           \max(m_{k'_2-M},m_{k'_2}) \left(\gamn{k_1+1} + \gamn{k_1}\right)}
 \ge R.
\end{align*}
Note how the fact that $0<k_2<M$ is used in the last inequality.
Also, we didn't need to use the induction hypothesis for this case.

Now assume that $M$ \emph{does} divide $k$, and write $k=Mk_1$ for some $k_1\in\integers$.
Because $m_e+m_{e-M}=\gam{e}\le\gam{0}=m_0$ for all $e\in\{0,\dots,M\}$
 --- here we use equation \eqref{eq:CS} ---
 we have the bounds
\begin{align*}
  \begin{alignedat}{6}
	    \gam[(n+1)]{k+1}
    & = \gam[(n+1)]{Mk_1+1}
   && = m_{1-M} && \gamn{k_1+1} &&+ m_1     &&\gamn{k_1}
  &&\le m_0 \max(\gamn{k_1}, \gamn{k_1+1}), \\
	    \gam[(n+1)]{k-1}
    & = \gam[(n+1)]{M(k_1-1)+(M-1)}
   && = m_{-1}  && \gamn{k_1}   &&+ m_{M-1} &&\gamn{k_1-1}
  &&\le m_0 \max(\gamn{k_1}, \gamn{k_1-1}).
  \end{alignedat}
\end{align*}
If we define $k'_1$ by setting $k'_1-k_1=k'-k$, $k'_1$ and $k_1$ are neighbours,  we can use these bounds as follows:
\begin{align*}
     \frac{\gam[(n+1)]{k}}{
           \gam[(n+1)]{k'}}
   = \frac{\gam[(n+1)]{Mk_1}}{
           \gam[(n+1)]{k'}}
&\ge \frac{m_0\gamn{k_1}}{
           m_0\max(\gamn{k_1}, \gamn{k'_1})}
   = \min\left(1,\frac{\gamn{k_1}}{
                       \gamn{k'_1}}\right)
 \ge R,
\end{align*}
where in the last step we used the induction hypothesis.
\end{proof}

% ---------------------------------------------
% Chapter: The lower spectral radius connection
% ---------------------------------------------
\section{The lower spectral radius connection} \seclbl{spectral radius connection}

In the previous section higher order Cantor sets were introduced.
The corresponding higher order expectation matrices $\exmat[(n)]{k}$ are
 products of the first-order matrices $\exmat{k}$, as \eqref{eq:gamma as matrix product} shows.
It will turn out that,
 for most $\vc{p}$ the lower spectral radius of the set of matrices $\exmat{k}$, $k\in\alphabet$,
 captures exactly the information needed to determine
 whether $F_1-F_2$ contains an interval or not.
The following definition generalizes the concept of `spectral radius' to a \emph{set} of matrices.

\begin{defn} \label{defn:lower and upper spectral radius}(\cite{Gurvits})
Let $\normop$ be a submultiplicative norm on $\reals^{d\times d}$
 and $\Sigma\subseteq\reals^{d\times d}$ a finite non-empty set of square matrices.
The \kw{lower spectral radius}  of $\Sigma$ is defined by
\begin{equation} \eqlbl{lower spectral radius defn}
      \lsr{\Sigma}  := \liminf_{n\to\infty} \lsrn{\Sigma,\normop}, \quad
      \lsrn{\Sigma,\normop}  := \min_{A_1,\dots,A_n\in\Sigma} \norm{A_1\cdots A_n}^{1/n}.
\end{equation}
\end{defn}

It is easily seen that the definition of the lower  spectral radius is independent
 of the particular choice of matrix norm.

% ------------------------
%  Section: A new theorem
% ------------------------
\subsection{A spectral radius characterization}
For the algebraic difference between two $M$-adic random Cantor sets,
 we are interested in the lower spectral radius of the set of matrices
\begin{align} \eqlbl{Sigma M}
     \Sigma_{\cM}:=\{\cM(0),\dots,\cM(M-1)\}.
\end{align}

The following theorem provides a generalization of \thmref{central} for the symmetric case
 using the concept of the lower spectral radius.

 First we need yet another notion.

 \noindent We call the collection of matrices $\{\exmat{0}, \cdots, \exmat{M-1}\}$ \emph{irreducible} if
 $$\ex Z^{LR}(k)>0,\qquad k=1,\dots,M-1,$$
  and
  $$\ex Z^{RL}(k)>0,\qquad k=0,\dots,M-2.$$
 Note that the constraints on $k$ are natural, because \emph{always} $\ex Z^{LR}(0)=0$ and $\ex Z^{RL}(M-1)=0$.

%Central in the connection with the Cantor set algebraic difference are
% \begin{align}
%   \cM^{(n)}(k) &= \cM(k_1 \dots k_n) = \cM(k_1) \cdots \cM(k_n), \\
%   (\gamma^{(n)}_{k+1}, \gamma^{(n)}_{k}) &= \vc{e} \cM^{(n)}(k) = (1, 1) \cM(k_1) \cdots \cM(k_n),
% \end{align}
%where $k_1\dots k_n$ is the length-$n$ $M$-ary expansion of $k$ and $\vc{e}=(1,1)$.
%
\begin{thm} \thmlbl{central with lsr}
Consider the algebraic difference $F_1-F_2$ between two $M$-adic independent random Cantor sets $F_1$ and $F_2$
  whose joint survival distributions satisfy the joint survival condition, have equal marginal probabilities and lead
 to an irreducible collection  $\Sigma_\cM$ as in \eqref{eq:Sigma M}.
\begin{itemize}
 \item[(a)] If $\lsr{\Sigma_\cM}>1$, then $F_1-F_2$ contains an interval a.s.~on $\set{F_1\!-\!F_2\neq\es}$.
 \item[(b)] If $\lsr{\Sigma_\cM}<1$, then $F_1\!-\!F_2$ contains no intervals a.s.
\end{itemize}
\end{thm}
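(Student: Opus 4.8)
The strategy is to reduce \thmref{central with lsr} to \thmref{central} applied to a sufficiently high-order Cantor set, using \lemref{bounded skewness} to control the distribution of the higher-order correlation coefficients. Recall from \eqref{eq:gamma as matrix product} that $\gamvec[(n)]{k} = \vc{e}\,\exmat{k_1}\cdots\exmat{k_n}$ where $k = \radix[M]{\tnk}$, so the $n$-th order correlation coefficients are, up to the fixed row vector $\vc{e}=(1,1)$ acting on the left, exactly the entries of length-$n$ products of matrices from $\Sigma_\cM$. Since $F^{(n)} \sim F$, \thmref{central} applied at order $n$ says: if all $\gamn{k}>1$ then $F_1-F_2$ contains an interval a.s.\ on $\{F_1-F_2\neq\es\}$ (part (a)), and if two neighbouring $\gamn{k},\gamn{k+1}$ are both $<1$ then $F_1-F_2$ contains no interval a.s.\ (part (b)). The JSC propagates to all orders, and irreducibility of $\Sigma_\cM$ is precisely the hypothesis $m_e>0$ for $|e|<M$ needed to invoke \lemref{bounded skewness}, so both theorems are applicable at every order $n$.

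\textbf{Part (b): $\lsr{\Sigma_\cM}<1$ implies no interval.} Fix a submultiplicative norm, say the max-absolute-entry norm scaled to be submultiplicative, or simply use that for nonnegative matrices $\|A_1\cdots A_n\|_\infty$ is comparable to $\vc{e}\,A_1\cdots A_n\,\vc{e}^{\,T} = \gamn{k-1}+\gamn{k}$ for the appropriate $k$ (by the alternative-notation identities and \lemref{exmat sum to gamma}; a left column contributes $\gamn{\,\cdot\,}$ and the entries are within a bounded factor of the column sums). If $\lsr{\Sigma_\cM}<1$ then there is $\rho<1$ and arbitrarily large $n$ and a product $\exmat{k_1}\cdots\exmat{k_n}$ of norm at most $\rho^n$; then every entry of that product — hence both $\gamn{k}$ and $\gamn{k+1}$ for the corresponding index — is at most a constant times $\rho^n$, which is $<1$ once $n$ is large. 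Applying \thmref{central}(b) at that order $n$ gives that $F_1-F_2$ contains no interval a.s. (One must be a little careful to get two \emph{neighbouring} coefficients small; this is where the fact that a left column of $\exmat[(n)]{l}$ carries $\gamn{\,\cdot\,}$ and that the product matrix has both a top and bottom row of comparable size enters, or one simply uses \eqref{eq:gamma recursion with m_e} to pass between neighbours at the cost of a bounded factor.)

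\textbf{Part (a): $\lsr{\Sigma_\cM}>1$ implies an interval.} Here is where \lemref{bounded skewness} does the real work. Suppose $\lsr{\Sigma_\cM}=:\beta>1$. By the definition of the lower spectral radius, there is $N$ such that for all $n\ge N$, \emph{every} product $\exmat{k_1}\cdots\exmat{k_n}$ has norm at least $(\tfrac{1+\beta}{2})^n$, so in particular $\max(\gamn{k},\gamn{k+1}) \ge c\,(\tfrac{1+\beta}{2})^n$ for every $k$ and a fixed constant $c>0$ (column sums of the product dominate a fixed multiple of the norm). By \lemref{bounded skewness} there is a constant $R\in(0,1]$, depending only on the $m_e$, with $\min(\gamn{k},\gamn{k+1})\ge R\max(\gamn{k},\gamn{k+1})$ for all $n$ and $k$; chaining neighbour-ratio bounds shows in fact $\gamn{k}\ge R^{M^n-1}\cdots$ — no: more usefully, \lemref{bounded skewness} gives $\gamn{k}\ge R\,\gamn{k'}$ whenever $|k-k'|=1$, and since for each $n$ there is \emph{some} index where $\gamn{\cdot}$ is large, iterating the neighbour bound across at most $M^n$ steps is too lossy. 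Instead the right argument is: use \lemref{bounded skewness} in its stated form together with the fact that the \emph{maximum} over $k$ of $\gamn{k}$ grows like $\beta^n$, and bound $\min_k \gamn{k}$ from below by combining the recursion \eqref{eq:one step gamma recursion with m_e} with the uniform neighbour-ratio bound. Concretely, from \eqref{eq:one step gamma recursion with m_e}, $\gam[(n+1)]{Mk+l}\ge \min_e m_e \cdot(\gamn{k}+\gamn{k+1})\ge \min_e m_e\cdot \max(\gamn{k},\gamn{k+1})$, so $\min_j \gam[(n+1)]{j}\ge (\min_{0<|e|<M} m_e)\cdot \max_k\max(\gamn{k},\gamn{k+1}) \ge (\min_{0<|e|<M} m_e)\cdot \max_k \gamn{k}$. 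Thus $\min_j \gam[(n+1)]{j}$ is bounded below by a fixed positive multiple of $\max_k\gamn{k}$, and since $\max_k\gamn{k}\to\infty$ (as it is at least $c\,(\tfrac{1+\beta}{2})^n$), we get $\min_j \gam[(n+1)]{j}>1$ for all large $n$. Apply \thmref{central}(a) at such an order $n$: all $n$-th order correlation coefficients exceed $1$, so $F_1^{(n)}-F_2^{(n)}$, and hence $F_1-F_2$, contains an interval a.s.\ on $\{F_1-F_2\neq\es\}$.

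\textbf{Main obstacle.} The routine parts — propagation of the JSC, the identification of $\gamn{\,\cdot\,}$ with entries of matrix products via \eqref{eq:gamma as matrix product}, norm-independence of $\lsr{}$ — are handled by results already in the excerpt. The delicate point is part (a): from $\lsr{\Sigma_\cM}>1$ one directly controls only the \emph{largest} entry of \emph{every} long product, whereas \thmref{central}(a) needs \emph{all} the $\gamn{k}$ (there are $M^n$ of them) to exceed $1$ \emph{simultaneously} at a single order $n$. Bridging this gap is exactly the purpose of \lemref{bounded skewness}: it guarantees that within each level the coefficients cannot be too skewed relative to one another, so once one of them is large, a one-step application of the recursion \eqref{eq:one step gamma recursion with m_e} forces \emph{all} coefficients at the next order to be comparably large. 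Making the constants in this propagation explicit and uniform in $n$ — and checking that ``large'' eventually means ``$>1$'' rather than merely ``bounded below'' — is the one step that needs genuine care; the rest is bookkeeping with the alternative notation of \secref{an alternative notation}.
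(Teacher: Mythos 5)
Your high-level plan---reduce to \thmref{central} at a high order via the identity $\gamvec[(n)]{k}=\vc{e}\exmat{k_1}\cdots\exmat{k_n}$, and use \lemref{bounded skewness} to bridge from a norm bound on products to a uniform bound on the $\gamn{k}$---is exactly the paper's approach. Part (b) as you state it is correct, though it can be made cleaner: since $\vc{e}\exmat[(n)]{k}=(\gamn{k+1},\gamn{k})$ by \lemref{exmat sum to gamma}, the max-column-sum norm of $\exmat[(n)]{k}$ is \emph{exactly} $\max(\gamn{k+1},\gamn{k})$; one short product of norm $<1$ in this norm instantly gives two neighbouring $\gamn{\,\cdot\,}$ both below $1$, no comparability-of-entries discussion needed.

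Part (a) contains a genuine error. After correctly setting up the bounded-skewness constant $R$ you abandon the direct argument (``no: more usefully\ldots'') and switch to a recursion bound, culminating in
\[
  \min_j \gam[(n+1)]{j}\;\ge\;\Bigl(\min_{0<|e|<M} m_e\Bigr)\cdot \max_k\max\bigl(\gamn{k},\gamn{k+1}\bigr).
\]
This inequality is false. The recursion bound $\gam[(n+1)]{Mk+l}\ge(\min_e m_e)(\gamn{k}+\gamn{k+1})$ ties the index $j=Mk+l$ on the left to the \emph{same} $k$ on the right, so minimising over $j$ can only give a $\min_k$ on the right, not a $\max_k$. (Also, the bound fails outright for $l=0$, where $\gam[(n+1)]{Mk}=m_0\gamn{k}$ involves only $\gamn{k}$, not the sum of two neighbours.) A minimum cannot in general be bounded below by a multiple of a maximum, and here there is no mechanism transporting the large value across indices.

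The path you started and abandoned is the one that works, and it is what the paper does. The definition of the lower spectral radius as a $\liminf$ of \emph{minima over all products} gives that for infinitely many $n$ (not ``for all $n\ge N$''---a $\liminf$ does not give an eventual lower bound) one has $\onenorm{\cM^{(n)}_k}=\max(\gamn{k},\gamn{k+1})>(1+\delta/2)^n$ for \emph{every} $k$. Pick one such $n$ with $(1+\delta/2)^n>1/R$. Then for every $k$, \lemref{bounded skewness} gives directly
\[
  \gamn{k}\;\ge\;\min\bigl(\gamn{k},\gamn{k+1}\bigr)\;\ge\;R\,\max\bigl(\gamn{k},\gamn{k+1}\bigr)\;>\;R\cdot\frac1R\;=\;1,
\]
and \thmref{central}(a) at order $n$ finishes. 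No extra appeal to the recursion \eqref{eq:one step gamma recursion with m_e} is needed; bounded skewness already converts the uniform lower bound on $\max(\gamn{k},\gamn{k+1})$ into a uniform lower bound on $\gamn{k}$ itself.
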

%%%\index{theorem!lower spectral radius}%
\begin{proof}
\newcommand{\matrixset}{\Sigma_\cM}
Let $\onenormop$  denote the maximum absolute column sum norm.

First assume that $\lsr{\matrixset}<1$.
Then there exists a number $n$ such that $\lsrn{\matrixset,\onenormop}<1$.
In particular, there exist $k_1,\dots,k_n\in\alphabet$ such that
\begin{align*}
  \onenorm{\cM(k_1)\cdots\cM(k_n)} < 1.
\end{align*}
It follows that for  $k=\radix{k_1\dots k_n}\in\alphabet^{(n)}$
\begin{align} \eqlbl{max gamma product norm inequality}
     \max\left(\gamma^{(n)}_{k+1}, \gamma^{(n)}_k\right)
     = \onenorm{\cM^{(n)}_k}=\onenorm{{\cM}(k_1)\cdots{\cM}(k_n)} < 1.
\end{align}
From \thmref{central}  statement  (b) follows.

Now assume that $\lsr{\matrixset}\ge 1+\delta$, with $\delta>0$.
Then there exist infinitely many $n$ such that for all $k_1,\dots,k_n\in\alphabet$
\begin{align*}
  \onenorm{{\cM}(k_1) \cdots {\cM}(k_n)}^{1/n} \ge \lsrn{\matrixset,\onenormop} > 1+\delta/2.
\end{align*}
This implies that there exists an $n$ such that for all $k=\radix{k_1\dots k_n}\in\alphabet^{(n)}$
\begin{align*}
  \max\left(\gamma^{(n)}_{k+1}, \gamma^{(n)}_k\right)
   =  \onenorm{\cM^{(n)}_k}=\onenorm{{\cM}(k_1) \cdots {\cM}(k_n)} > \frac{1}{R},
\end{align*}
where by irreducibility we can take  the constant $R$ as in \eqref{eq:R def}.
Using \lemref{bounded skewness}, this implies that for this $n$ and all $k\in\alphabet^{(n)}$
\begin{align*}
     \gamma^{(n)}_k
 \ge \frac{\min(\gamn{k},\gamn{k+1})}{
           \max(\gamn{k},\gamn{k+1})} \cdot \max(\gamn{k},\gamn{k+1})
   > R \cdot \frac{1}{R} = 1,
\end{align*}
so  statement (a) follows also from applying \thmref{central}.
\end{proof}

We remark that the assumptions $\vc{p}=\vc{q}$ and irreducibility
 are used only in the second part of the proof,
 hence these conditions are not necessary for statement (b) in the theorem.

% -------------------------------
%  Section: Scope of the theorem
% -------------------------------
\subsection{Scope of the theorem}
We consider the following two questions:
\begin{itemize}
 \item When do we get $\lsr{\Sigma_\cM}=1$ or a reducible  $\Sigma_{\cM}$, the cases the theorem says nothing about?
 \item How can we calculate $\lsr{\Sigma_\cM}$? Is there an explicit expression or algorithm for calculating it?
\end{itemize}
There is good and there is bad news here:
 the good news is that the cases $\lsr{\Sigma_\cM}=1$ and $m_e=0$ for some $\abs{e}<M$
 happen only for a very limited set of vectors of marginal probabilities $\vc{p}$ in $[0,1]^M$,
 but the bad news is that the lower spectral radius is in general hard to calculate (see \cite{TB}).

First note that $m_e=0$ for some $\abs{e}<M$ happens only when at least one of the $p_i=0$, $i\in\alphabet$.
Thus
\begin{align*}
 E_0 := \set{\vc{p}\in[0,1]^M: \Sigma_\cM \,{\rm reducible} }
\end{align*}
 has dimension at most $M-1$, and hence has empty interior and Lebesgue measure zero.
The other exceptional set in the theorem is
\begin{align} \eqlbl{P_1 def}
 E_1 := \set{\vc{p}\in[0,1]^M: \lsr{\Sigma_\cM}=1}.
\end{align}
The lower spectral radius has the following \kw{scaling property} with respect to the vector of marginal probabilities:
 if $\tilde{\vc{p}}=c\vc{p}$ for some $c\ge0$, then $\tilde{\cM}(k)=c^2\cM(k)$ for all $k\in\alphabet$ and hence
  $\lsr{\Sigma_{\tilde{\cM}}}=c^2\lsr{\Sigma_\cM}$.
Thus for each vector of marginal probabilities $\vc{p}$ at most one scalar multiple is in $E_1$.
Similarly, if $\tilde{\vc{p}}\ge\vc{p}$ component-wise, then $\tilde{\cM}(k)\ge\cM(k)$ component-wise
 and hence $\lsr{\Sigma_{\tilde{\cM}}}\ge\lsr{\Sigma_\cM}$.
Combining this with the scaling property, it follows that if $\tilde{\vc{p}}>\vc{p}$ component-wise,
 then also $\lsr{\Sigma_{\tilde{\cM}}}>\lsr{\Sigma_\cM}$.
%%_%%We call this the \kw{strict monotonicity property}.
From the scaling property it also follows that $E_1$ has empty interior.

With respect to the irreducibility condition for  $\Sigma_{\cM}$ it is interesting to consider
 the second example given in \cite{DS}, Section 7.
Here  $\vc{p}=(1,0,p,0,1)$ with $p\in[0,1]$.
(See \figref{example p=(1,0,p,0,1)})
The corresponding (reducible) set of expectation matrices is given by
\begin{align*}
 (\exmat{k})_{k\in\alphabet} &=
 \left(
  \begin{bmatrix} 1     & 0  \\ 0  & 2+p^2 \end{bmatrix},
  \begin{bmatrix} 0     & 1  \\ 2p & 0     \end{bmatrix},
  \begin{bmatrix} 2p    & 0  \\ 0  & 2p    \end{bmatrix},
  \begin{bmatrix} 0     & 2p \\ 1  & 0     \end{bmatrix},
  \begin{bmatrix} 2+p^2 & 0  \\ 0  & 1     \end{bmatrix}
 \right),
\end{align*}
hence $(\gam{k})_{k\in\alphabet} = (2+p^2,1,2p,2p,1)$.

\begin{figure}[t]
\centering
\includegraphics*[width =11cm]{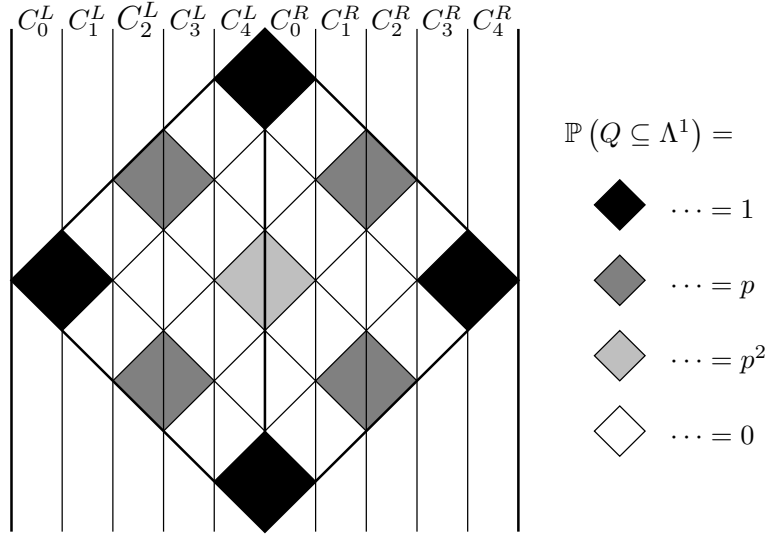}
\caption{The parameterized family $\vc{p}=(1,0,p,0,1)$, $p\in[0,1]$.}
\figlbl{example p=(1,0,p,0,1)}
\end{figure}

The level 1 expectation matrices have the property that each row and column contains at most one non-zero element.
We will call matrices having this property \kw{permutative matrices}.
%%%\index{permutative matrix}%
Clerly any product of permutative matrices is permutative again.
Moreover, letting $\pi(A)$ denote the product of the non-zero entries of a permutative matrix $A$, it follows that
\begin{align*}
 \pi(\exmat{\tnk}) = \prod_{i=1}^{n} \pi(\exmat{k_i})
\end{align*}
for all $\tnk\in\tree$.
It is easy to see that for any permutative $D\times D$ matrix
\begin{align*}
 \infnorm{A}=\norm{A}_1\ge\sqrt[D]{\abs{\pi(A)}}.
\end{align*}
If  $p\ge\shalf$, then $\pi(\exmat{k})\ge2p$ for all $k\in\alphabet$.
Altogether, by plugging the two  equations above into the definition of the lower spectral radius,
 we find that
\begin{align*}
  \lsr{\Sigma_\cM} \ge  \sqrt{2p} >1, \quad \mathrm{if}\; p>\shalf.
  \end{align*}
However, in \cite{DS} it is  shown that  for \emph{all} $p<1$ the algebraic difference $F_1-F_2$ contains no interval a.s.
This  example thus shows that at least some irreducibility condition
 is necessary in \thmref{central with lsr}.

% ---------------------------------------
% Chapter: Classifying 2-adic random Cantor sets
% ---------------------------------------
\section{Classifying 2-adic random Cantor sets} \seclbl{2-adic Cantor sets}
In this section we consider the symmetric case for $M=2$. For short we write
$(\alpha,\beta):=(p_0,p_1)=(q_0,q_1)$ for the marginals. Note that
$$p_0+p_1=\mu(\{0\})+\mu(\{1\})+2\mu(\{0,1\}).$$
 Since we require $p_0+p_1> 1$ (recall \eqref{eq:no extinction}), it follows that $\mu(\{0,1\})>0$, and so the  joint survival condition is always verified.
%%%\index{2-adic random Cantor set}%
%%%\index{random Cantor set!2-adic}%

% ----------------------
% Section: Expectations
% ----------------------
\subsection{Expectations}
The expectation matrices are given by
\begin{align} \eqlbl{2-adic exmat expr}
  \exmat{0} &= \begin{bmatrix}\alpha\beta & 0 \\ \alpha\beta & \alpha^2+\beta^2 \end{bmatrix}, &
  \exmat{1} &= \begin{bmatrix}\alpha^2+\beta^2 & \alpha\beta \\ 0 & \alpha\beta \end{bmatrix},
\end{align}
%%%\index{expectation matrix!2-adic}%
and the $\gam{k}$ and $m_e$ are given by
\begin{align*}
 \gam{0} = \alpha^2+\beta^2,\;  \gam{1} = 2\alpha\beta,\qquad m_0 = \alpha^2+\beta^2,\;  m_1 = \alpha\beta,\;  m_2 &= 0.
\end{align*}
%%%\index{m_e@$m_e$!2-adic}%
Recursion relation \eqref{eq:one step gamma recursion with m_e} hence becomes
\begin{equation} \eqlbl{2-adic gamma recursion}
\begin{alignedat}{2}
   \gam[(n+1)]{2k}   &= m_0                            \gamn{k}
                     &&= \left(\alpha^2+\beta^2\right) \gamn{k}, \\
   \gam[(n+1)]{2k+1} &=  m_1         \left(\gamn{k}+\gamn{k+1}\right)
                     &&= \alpha\beta \left(\gamn{k}+\gamn{k+1}\right),
\end{alignedat}
\end{equation}
%%%\index{correlation coefficient!2-adic}%
for all $n\ge0$ and $k\in\integers$.

Note that $p_0+p_1>1$ implies that $\alpha,\beta>0$. Therefore both $m_0>0$ and $m_1>0$.

% --------------------------
% Section: Neighbour bounds
% --------------------------
\subsection{Neighbour bounds}
The skewness lower bound from \lemref{bounded skewness} is reduced to the simple expression
\begin{align} \eqlbl{2-adic R}
  R = \frac{m_1}{m_0} = \frac{\alpha\beta}{\alpha^2+\beta^2}.
\end{align}
The following lemma provides a bound that is even sharper than that of \lemref{bounded skewness}
 and shows a specific ordering of neighbouring \hogams{}
 that appears in the symmetric 2-adic algebraic difference.
See also \figref{2-adic neighbor bounds}.

 \begin{figure}[h]
\centering
\includegraphics*[width =11cm]{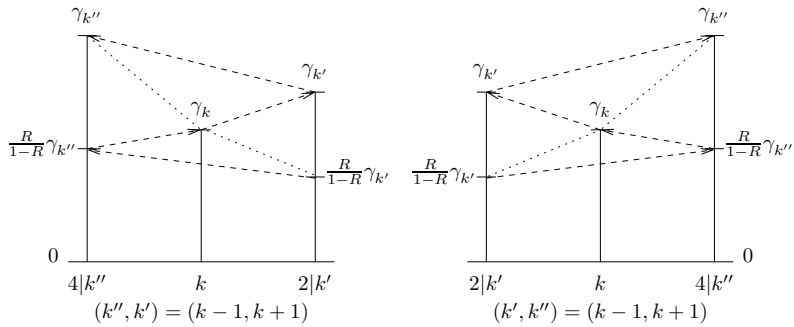}
\caption{A graphical representation of \lemref{2-adic gamma bounds}, with the $\gam{k}$ displayed as a bar graph.
 The situation is shown for the two solutions to the equation $\{k',k''\}=\{k-1,k+1\}$.
 The arrows indicate the order of the inequalities in the lemma.}%
\figlbl{2-adic neighbor bounds}%
\end{figure}

\begin{lem} \lemlbl{2-adic gamma bounds}
Consider the algebraic difference between two independent 2-adic random Cantor sets
 that have equal vectors of marginal probabilities.
Then
\begin{align} \eqlbl{2-adic gamma bounds}
      \frac{R}{1-R} \gamn{k'}
  \le \frac{R}{1-R} \gamn{k''}
  \le \gamn{k}
  \le \gamn{k'}
  \le \gamn{k''}
\end{align}
 for all $n\ge0$, odd $k\in\integers$ and $\{k',k''\}=\{k-1,k+1\}$ such that $4|k''$ and $2|k'$.
\end{lem}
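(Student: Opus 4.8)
The plan is to prove \eqref{eq:2-adic gamma bounds} by induction on $n$, exploiting the very explicit form of the recursion \eqref{eq:2-adic gamma recursion} in the $2$-adic symmetric case together with the fact that $R = m_1/m_0 = \alpha\beta/(\alpha^2+\beta^2) \in (0,\tfrac12]$. For $n=0$ all the $\gam[(0)]{k}$ equal $1$, so the chain of inequalities reduces to checking $\tfrac{R}{1-R} \le 1$, which holds precisely because $R \le \tfrac12$. For the inductive step I would fix an odd $k$ and a labelling $\{k',k''\} = \{k-1,k+1\}$ with $4 \mid k''$ and $2 \mid k'$; note that since $k$ is odd exactly one of $k\pm1$ is divisible by $4$, so this labelling is well-defined. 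I would then write $k = 2j+1$, $k'' = 2j$ or $2j+2$ and $k' = 2j+2$ or $2j$ accordingly, and express all five quantities $\gam[(n)]{k'},\gam[(n)]{k''},\gam[(n)]{k},\gam[(n)]{k'},\gam[(n)]{k''}$ appearing at level $n$ via \eqref{eq:2-adic gamma recursion} in terms of level-$(n-1)$ values $\gam[(n-1)]{j}, \gam[(n-1)]{j+1}, \gam[(n-1)]{j-1}$ (or $\gam[(n-1)]{j+2}$), using $\gam[(n)]{2k}=m_0\gam[(n-1)]{k}$ on the even indices and $\gam[(n)]{2k+1}=m_1(\gam[(n-1)]{k}+\gam[(n-1)]{k+1})$ on the odd one.

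The heart of the argument is then four inequalities, each to be derived from the inductive hypothesis at level $n-1$. The two middle inequalities $\gam[(n)]{k}\le\gam[(n)]{k'}\le\gam[(n)]{k''}$ should follow by comparing $m_1(\gam[(n-1)]{j}+\gam[(n-1)]{j+1})$ with $m_0\gam[(n-1)]{j}$ and $m_0\gam[(n-1)]{j+1}$: since $m_0 = \gam{0} \ge \gam{1} = 2m_1$ (this is \eqref{eq:CS}, $m_0 \ge 2m_1$, equivalently $R \le \tfrac12$), one has $m_1 x \le m_1 y \le m_0 y$ whenever $x \le y$, and the induction hypothesis at level $n-1$ supplies the ordering among the relevant neighbouring $\gam[(n-1)]{\cdot}$ — here I need to be careful that the indices $j, j\pm 1$ themselves form a configuration (one odd, its two even neighbours) covered by the inductive statement, so that I may invoke \eqref{eq:2-adic gamma bounds} at level $n-1$. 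The outer two inequalities, $\tfrac{R}{1-R}\gam[(n)]{k'}\le\tfrac{R}{1-R}\gam[(n)]{k''}\le\gam[(n)]{k}$, are the more delicate ones: after dividing by $m_0$ they amount to showing $\tfrac{R}{1-R}(\gam[(n-1)]{j}+\gam[(n-1)]{j+1}) \le R\,\gam[(n-1)]{j}$ (in the appropriate labelling), i.e. $\gam[(n-1)]{j+1} \le (1-R-R)\,\gam[(n-1)]{j}/R$ — this is where the constant $R/(1-R)$ is forced, and it should come out of the level-$(n-1)$ hypothesis bound $\gam[(n-1)]{k'} \le \tfrac{1-R}{R}\gam[(n-1)]{k}$ rearranged, combined with $\gam[(n-1)]{k} \le \gam[(n-1)]{k'}$.

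The main obstacle I anticipate is \textbf{bookkeeping of the index parity and the interlocking of the two labellings}: at level $n$ the distinguished odd index is $k$, but at level $n-1$ the distinguished odd index is whichever of $j, j\pm1$ is odd, and one must verify that the $4\mid{}$ / $2\mid{}$ labelling propagates correctly so that the level-$(n-1)$ statement is applicable with the right pairing — essentially checking that $v_2(k'')$ at level $n$ corresponds to the right divisibility at level $n-1$ after halving. It will help to treat the two cases ($k'' = k-1$ versus $k'' = k+1$) symmetrically, perhaps reducing one to the other by the reflection symmetry $\gam[(n)]{k} \leftrightarrow \gam[(n)]{2^n - 1 - k}$ of the symmetric $2$-adic construction. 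Once the index arithmetic is pinned down, each of the four inequalities is a short manipulation using only $m_0 = \alpha^2+\beta^2$, $m_1 = \alpha\beta$, $m_0 \ge 2m_1 > 0$, and the level-$(n-1)$ chain; no heavier machinery than \lemref{bounded skewness}'s ingredients and \eqref{eq:CS} should be needed.
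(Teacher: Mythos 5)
Your overall strategy---induction on $n$ via the recursion \eqref{eq:2-adic gamma recursion}, with base case $\gam[(0)]{k}\equiv 1$ reducing to $\tfrac{R}{1-R}\le 1$---coincides with the paper's. However, your plan for the inductive step misclassifies which of the four inequalities are ``easy,'' and this hides the real content of the lemma. You claim that $\gamn{k}\le\gamn{k'}$ (one of your ``middle'' inequalities) follows from $m_0\ge 2m_1$ plus the level-$(n-1)$ ordering, via ``$m_1x\le m_1y\le m_0y$ when $x\le y$.'' That route only yields the weaker $\gamn{k}\le\gamn{k''}$. Writing $\gamn{k}=m_1\bigl(\gam[(n-1)]{k'/2}+\gam[(n-1)]{k''/2}\bigr)$ and $\gamn{k'}=m_0\gam[(n-1)]{k'/2}$, the target $\gamn{k}\le\gamn{k'}$ rearranges to $\gam[(n-1)]{k''/2}\le\tfrac{1-R}{R}\gam[(n-1)]{k'/2}$, which is precisely the $\tfrac{R}{1-R}$ part of the induction hypothesis, not a consequence of $R\le\tfrac12$. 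In the paper's proof the trivial pair is $\gamn{k'}\le\gamn{k''}$ together with its $\tfrac{R}{1-R}$-scaled copy, both from the single ordering $\gam[(n-1)]{k'/2}\le\gam[(n-1)]{k''/2}$, while the two \emph{middle} inequalities $\tfrac{R}{1-R}\gamn{k''}\le\gamn{k}\le\gamn{k'}$ both require the $\tfrac{R}{1-R}$ bound from the hypothesis. Your rearranged form ``$\gam[(n-1)]{j+1}\le(1-2R)\gam[(n-1)]{j}/R$'' also contains an arithmetic slip: the correct target is $\tfrac{R}{1-R}\gam[(n-1)]{k''/2}\le\gam[(n-1)]{k'/2}$, equivalently $\gam[(n-1)]{k''/2}\le\tfrac{1-R}{R}\gam[(n-1)]{k'/2}$.

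Separately, the parity bookkeeping you flag as ``the main obstacle'' essentially evaporates. From $2\mid k'$, $4\nmid k'$ and $4\mid k''$ one has that $k'/2$ is odd, $k''/2$ is even, and $\lvert k'/2-k''/2\rvert=1$; so at level $n-1$ the distinguished odd index is $k'/2$ and $k''/2$ is one of its two even neighbours. You need not determine which of $k'/2\pm1$ is divisible by $4$: the chain \eqref{eq:2-adic gamma bounds} at level $n-1$ gives $\gam[(n-1)]{k'/2}\le\gam[(n-1)]{j}$ and $\tfrac{R}{1-R}\gam[(n-1)]{j}\le\gam[(n-1)]{k'/2}$ for \emph{each} of its two neighbours $j$, and these symmetric consequences are all the inductive step uses. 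The reflection symmetry you contemplate invoking is unnecessary.
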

%%%\index{bounded skewness}%
%%%\index{correlation coefficient!bounded skewness}%
\begin{proof}
First note that since $m_0=\gam{0}\ge\gam{1}=m_{-1}+m_1=2m_1$, we have $R\le\half$,
 so the fraction $\frac{R}{1-R}$ is always well defined.

The proof is by induction on $n$.
Since $\gam[(0)]{k}=1$ for all $k\in\integers$, the case $n=0$ is trivial.
Now assume that \eqref{eq:2-adic gamma bounds} holds for some $n\ge0$,
 and let $k,k',k''$ be as stated.
Note that $k'/2$ is odd and $k''/2$ even, and $\abs{k'/2-k''/2}=1$.
The first and the last inequality of \eqref{eq:2-adic gamma bounds} follow by
\begin{align} \eqlbl{2-adic even gamma ineq derivation}
                                                 \gam[(n+1)]{k'}
 \stackrel{\eqref{eq:2-adic gamma recursion}}{=} m_0 \gamn{k'/2}
 \stackrel{\eqref{eq:2-adic gamma bounds}}{\le}  m_0 \gamn{k''/2}
 \stackrel{\eqref{eq:2-adic gamma recursion}}{=} \gam[(n+1)]{k''}.
\end{align}
The middle two inequalities  of \eqref{eq:2-adic gamma bounds} follow by
\begin{align*}
                                                                                  \frac{R}{1-R} \gam[(n+1)]{k''}
 & \stackrel{\eqref{eq:2-adic R}}{=}               \frac{m_1}{m_0} \left(\frac{R}{1-R}+1\right) \gam[(n+1)]{k''}
  \stackrel{\eqref{eq:2-adic gamma recursion}}{=} m_1             \left(\frac{R}{1-R}+1\right) \gamn{k''/2} \\
 & \stackrel{\eqref{eq:2-adic gamma bounds}}{\le}  m_1             \left(\gamn{k'/2} + \gamn{k''/2}\right)
   \stackrel{\eqref{eq:2-adic gamma recursion}}{=}                                              \gam[(n+1)]{k}
  \stackrel{\eqref{eq:2-adic gamma bounds}}{\le}  m_1             \left(1+\frac{1-R}{R}\right) \gamn{k'/2} \\
 & \stackrel{\eqref{eq:2-adic gamma recursion}}{=} \frac{m_1}{m_0} \left(1+\frac{1-R}{R}\right) \gam[(n+1)]{k'}
   \stackrel{\eqref{eq:2-adic R}}{=}                                                            \gam[(n+1)]{k'},
\end{align*}
where in the last inequality we used \eqref{eq:2-adic gamma bounds}, multiplied by a factor $\frac{1-R}{R}$.
%%_%%Note how the lower bounds on $\gamn{k}$ (with $k$ odd)
%%_%% are used inductively to prove the upper bounds and vice versa.
\end{proof}

%%%%%A weaker formulation of \lemref{2-adic gamma bounds} is that
%%%%%\begin{align} \eqlbl{alt 2-adic gamma bounds}
%%%%%      \frac{R}{1-R} \cdot \max(\gamn{2k},\gamn{2k+2})
%%%%%  \le \gamn{2k+1}
%%%%%  \le                     \min(\gamn{2k},\gamn{2k+2})
%%%%%\end{align}
%%%%% for all $n\ge0$ and $k\in\integers$.
%%%%%This formulation does not tell anything about the ordering of $\gamn{2k}$ and $\gamn{2k+2}$,
%%%%%but by observing that exactly one neighbor of an odd number $2k+1$ is divisible by $4$,
%%%%%and the other neighbor only by $2$,
%%%%%Equation \eqref{eq:2-adic even gamma ineq derivation} can be applied
%%%%%to obtain essentially the stronger statement that is found in \lemref{2-adic gamma bounds}.

% -------------------------------------
% Section: The smallest $\gamn{k}$
% -------------------------------------
\subsection{The smallest correlation coefficient}
\lemref{2-adic gamma bounds} allows us to pinpoint for each $n\ge0$
 a $k_n$ such that $\gamn{k_n}$ equals the minimal value
\begin{align*}
     \gamma^{(n)}
   = \min_{k\in\integers} \gamn{k}
   = \min_{k\in\alphabet^{(n)}} \gamn{k}.
\end{align*}
Obviously $k_n$ should be an odd number, but we can say more:
 one of the two neighbours of $2k_n$ will turn out to serve well as $k_{n+1}$.
%Whether the left or right neighbour should be chosen, alternates for every $n$.
Even though this selection procedure is very `local', it will still pinpoint a global minimum.

In order to decide which neighbour of $2k_n$ has to be chosen,
 an auxiliary sequence $\seq{k'_n}$ is defined.
This is done in such a way such that $k'_n$ is the neighbour of $k_n$
 that has the smallest value of $\gamn{k'_n}$.
Define the sequences $\seq{k_n}$ and $\seq{k'_n}$ by
\begin{align} \eqlbl{2-adic min seq defs}
  \begin{aligned}
    k_0  &:= 0, & k_{n+1}  &:= 2k_n + (k'_n-k_n) = k'_n+k_n, \\
    k'_0 &:= 1, & k'_{n+1} &:= 2k_n,
  \end{aligned}
\end{align}
for all $n\ge0$.
Note that this indeed makes $k'_n$ a neighbour of $k_n$ for all $n\ge0$,
 but their relative order alternates for subsequent $n$:
 $k'_{2n} = k_{2n}+1$, but $k'_{2n+1} = k_{2n+1}-1$ for all $n\ge0$.

We mention that the sequence $\seq{k_n}$ is also known as the Jakobsthal sequence,
 since $k_{n+2}=k_{n+1}+2k_n$ for $n\ge0$ and $k_0=0$, $k_1=1$.

%%\begin{figure}
%%\centering{\input{2-adic-min-tracking.pstex_t}}
%%\caption{The sequence $\seq{k_n}$ `tracking' the minimal $\gam[(n)]{k}$.
 %%The $\gam[(n)]{k_n}$ are displayed as a function of $k_n$ using a bar graph,
 %%but a scaling trick on both axes is used that allows to overlap bar graphs for different orders $n$.
 %%By scaling down $k$ by a factor $M^n$ and $\gam{k}$ by a factor $m_0^n$ (for the $n$-th order),
%%  any sequence of bars $\gam[(n)]{k},\gam[(n+1)]{Mk},\gam[(n+2)]{M^2k},\dots$ is displayed at the same location.
%% }
%%\figlbl{2-adic min tracking}
%%\end{figure}

\begin{lem} \lemlbl{2-adic min seq}
Let $\seq{k_n}_{n\ge0}$ and $\seq{k'_n}_{n\ge0}$ be as defined in \eqref{eq:2-adic min seq defs},
 then for all $n\ge0$
\begin{align} \eqlbl{2-adic min seq}
     \gamn{k_n}  &= \min_{k\in\integers} \gamn{k}
                      = \min_{k\in\integers} \gamn{2k+1}, &
     \gamn{k'_n} &= \min_{k\in\integers} \gamn{2k}.
\end{align}
\end{lem}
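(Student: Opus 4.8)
The plan is to prove the two equalities of \eqref{eq:2-adic min seq} \emph{simultaneously} by induction on $n$, strengthening the statement to the hypothesis $H(n)$: \emph{$\gamn{k_n}=\min_{k\in\integers}\gamn{k}$ and $\gamn{k'_n}=\min_{k\in\integers}\gamn{2k}$} (the minimum over the even indices). The third expression in \eqref{eq:2-adic min seq}, the minimum over the odd indices, will then drop out along the way. Throughout one uses the recursion \eqref{eq:2-adic gamma recursion}, namely $\gam[(n+1)]{2k}=m_0\gamn{k}$ and $\gam[(n+1)]{2k+1}=m_1\big(\gamn{k}+\gamn{k+1}\big)$, together with the facts already available that $m_0\ge 2m_1>0$ and $R=m_1/m_0\le\tfrac12$, that $k'_n$ is always a neighbour of $k_n$, and that $k_n$ is odd and $k'_n$ even for $n\ge1$ while $k_{n+1}=k_n+k'_n$ is odd for every $n\ge0$ (all visible from the definitions \eqref{eq:2-adic min seq defs} and the Jakobsthal recursion). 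The base case $n=0$ is immediate since $\gam[(0)]{k}=1$ for all $k$.

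For the inductive step, the \emph{even} part of $H(n+1)$ is easy: since $m_0>0$, minimizing $\gam[(n+1)]{2k}=m_0\gamn{k}$ over $k$ and invoking $H(n)$ gives $m_0\gamn{k_n}$, and this value is realized at $k'_{n+1}=2k_n$, so $\gam[(n+1)]{k'_{n+1}}=\min_k\gam[(n+1)]{2k}$. The \emph{odd} part is the heart of the matter. For any integer $k$, exactly one of $k,k+1$ is even, so by $H(n)$ that value is $\ge\gamn{k'_n}$, while the other is $\ge\gamn{k_n}$ because $\gamn{k_n}$ is the global minimum; hence $\gamn{k}+\gamn{k+1}\ge\gamn{k_n}+\gamn{k'_n}$, with equality attained at the neighbouring pair $\{k_n,k'_n\}$. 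Therefore $\min_k\gam[(n+1)]{2k+1}=m_1(\gamn{k_n}+\gamn{k'_n})$, and writing the odd number $k_{n+1}=k_n+k'_n$ as $2\kappa+1$ one checks $\{\kappa,\kappa+1\}=\{k_n,k'_n\}$, so this minimum is attained exactly at $k_{n+1}$.

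To close the induction one must still show that the minimum over odd indices does not exceed the minimum over even indices, i.e.\ $m_1(\gamn{k_n}+\gamn{k'_n})\le m_0\gamn{k_n}$; this identifies $\gam[(n+1)]{k_{n+1}}$ as the \emph{global} minimum at level $n+1$ (completing the first equality of $H(n+1)$) and at the same time yields the coincidence $\min_k\gam[(n+1)]{k}=\min_k\gam[(n+1)]{2k+1}$. For $n=0$ this is just $2m_1\le m_0$. For $n\ge1$, $k_n$ is odd and $k'_n$ is one of its even neighbours, so \lemref{2-adic gamma bounds} (its inequality $\tfrac{R}{1-R}\gamn{k'}\le\gamn{k}$ for odd $k$ and even neighbour $k'$) gives $\gamn{k'_n}\le\tfrac{1-R}{R}\gamn{k_n}$, whence $m_1(\gamn{k_n}+\gamn{k'_n})\le m_1\big(1+\tfrac{1-R}{R}\big)\gamn{k_n}=\tfrac{m_1}{R}\gamn{k_n}=m_0\gamn{k_n}$, using $R=m_1/m_0$ from \eqref{eq:2-adic R}.

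The main obstacle is conceptual rather than computational: the single assertion ``$k_n$ locates the global minimum'' cannot be inducted on in isolation, because the odd-index recursion couples $\gamn{k}$ with its neighbour $\gamn{k+1}$, so controlling the minimum over odd indices at level $n+1$ forces one to know, at level $n$, the minimum over the \emph{even} indices \emph{and} that it sits adjacent to the global minimizer --- hence the two-part hypothesis $H(n)$ and the companion sequence $\seq{k'_n}$. The one place where genuine work hides is the inequality $m_1(\gamn{k_n}+\gamn{k'_n})\le m_0\gamn{k_n}$ showing that this ``greedy'' odd minimum beats the even minimum, and that is precisely what the sharp neighbour bound of \lemref{2-adic gamma bounds} was tailored to supply.
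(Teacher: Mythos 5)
Your proof is correct and follows essentially the same route as the paper: induction driven by the recursion \eqref{eq:2-adic gamma recursion}, the parity observation that one of any two consecutive integers is even and the other odd, and \lemref{2-adic gamma bounds} to show that the odd-index minimum dominates. The only difference is where that lemma is invoked: the paper applies it at level $n+1$ to conclude directly that $\min_{k\in\integers}\gam[(n+1)]{k}=\min_{k\in\integers}\gam[(n+1)]{2k+1}$, whereas you apply it at level $n$ to derive the equivalent inequality $m_1(\gamn{k_n}+\gamn{k'_n})\le m_0\gamn{k_n}$ by a short computation (with a separate check at $n=0$).
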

%%%\index{correlation coefficient!smallest!2-adic higher order}%
%%%\index{higher order Cantor set!correlation coefficient!smallest 2-adic}%
\begin{proof}
The proof is by induction on $n$.
For $n=0$ the statements are trivial because $\gam[(0)]{k}=1$ for all $k\in\integers$.
Now suppose that the statement of the lemma holds for a certain $n\ge0$.
Then, by using the induction hypothesis, the last equality of \eqref{eq:2-adic min seq} follows from
\begin{align} \eqlbl{2-adic min gamma aux expr}
         \min_{k\in\integers} \gam[(n+1)]{2k}
  &= m_0 \min_{k\in\integers} \gamn{k}
   = m_0 \gamn{k_n}
   = \gam[(n+1)]{2k_n}
   = \gam[(n+1)]{k'_{n+1}}.
\end{align}
Note that by the induction hypothesis
 \[ \gamn{k_n} =\min_{k\in\integers}\gamn{2k+1} \quad \text{and} \quad
    \gamn{k'_n}=\min_{k\in\integers}\gamn{2k}, \]
thus by observing that of any two consecutive integers one is always even and one is always odd,
  it follows that
 \[ \min_{k\in\integers} \left(\gamn{k}+\gamn{k+1}\right) = \gamn{k_n} + \gamn{k'_n}. \]
By applying \lemref{2-adic gamma bounds} the first, double equality from \eqref{eq:2-adic min seq} follows:
\begin{align}
     \min_{k\in\integers} \gam[(n+1)]{k}
   = \min_{k\in\integers} \gam[(n+1)]{2k+1}
  &= m_1 \min_{k\in\integers} \left(\gamn{k} + \gamn{k+1}\right) \notag\\
  &= m_1 \left(\gamn{k_n} + \gamn{k'_n}\right)
   = \gam[(n+1)]{2k_n+(k'_n-k_n)}
   = \gam[(n+1)]{k_{n+1}}. \eqlbl{2-adic min gamma expr}
\end{align}
\end{proof}

% -------------------------------------------
% Section: Limit behaviour of $\gamma^{(n)}$
% -------------------------------------------
\subsection{Limit behavior of \texorpdfstring{$\gamma^{(n)}$}{gamma\textasciicircum(n)}}
Define the sequence $\seq{a_n}_{n\ge0}$ of minimum values by setting
\begin{align} \eqlbl{a_n def}
  a_n := \gamn{k_n},
\end{align}
 for all $n\ge0$.
Using \eqref{eq:2-adic min gamma expr} and \eqref{eq:2-adic min gamma aux expr}
 the following recurrence relation is obtained:
\begin{align} \eqlbl{a_n recurrence relation}
    a_{n+2}
  = \gam[(n+2)]{k_{n+2}}
 &= m_1 \left(\gam[(n+1)]{k_{n+1}} +     \gam[(n+1)]{k'_{n+1}}\right) \notag\\
 &= m_1 \left(\gam[(n+1)]{k_{n+1}} + m_0 \gamn{k_n}       \right)
  = m_1 a_{n+1} + m_1m_0 a_n,
\end{align}
for all $n\ge0$. The initial conditions are given by
\begin{align} \eqlbl{a_n initial conditions}
  a_0 &= \gam[(0)]{k_0} = 1, &
  a_1 &= \gam[(1)]{k_1} = \gam{1} = 2m_1.
\end{align}
The characteristic polynomial of this linear recurrence relation is $h(x)=x^2-m_1x-m_0m_1$
with roots
\begin{equation*}
  x_{\pm} = \shalf m_1 \pm \sqrt{m_1 m_0 + \sfrac{1}{4} m_1^2}.
\end{equation*}

Since $m_1>0$,
 $x_+$ and $x_-$ are two distinct roots of the characteristic equation,
  so  the general form of the solution to the recurrence relation is given by
\begin{align*}
 a_n = c_+^{} x_+^n + c_-^{} x_-^n,
\end{align*}
 for all $n\ge0$, where the constants $c_+$ and $c_-$ are determined by the initial conditions.
Since $x_+>0$ is the largest zero of the parabola $h$ we have
\begin{equation*}
  \abs{x_+}<1 \quad\Leftrightarrow\quad h(1)>0  \quad\Leftrightarrow\quad m_1(m_0+1) < 1,
\end{equation*}
thus we can conclude that
\begin{align} \eqlbl{a_n lim}
   \lim_{n\to\infty} \gamn{k_n}
 = \lim_{n\to\infty} a_n &= \begin{cases}
     0,      & \abs{x_+} < 1, \\
          \infty, & \abs{x_+} > 1,
   \end{cases} = \begin{cases}
     0,      & m_1(m_0+1) < 1, \\
         \infty, & m_1(m_0+1) > 1.
   \end{cases}
\end{align}
Using the rightmost equations in \eqref{eq:2-adic min gamma aux expr},
 we find that the neighbour sequence $\gamn{k'_n}$ behaves in exactly the same way.

Altogether this leads to the following result:

\begin{thm} \thmlbl{2-adic central}
Consider the algebraic difference $F_1-F_2$ between two independent 2-adic random Cantor sets $F_1$ and $F_2$
 whose joint survival distributions have the same marginal probability vectors.
 \begin{itemize}
 \item If $C>1$, then $F_1-F_2$ contains an interval a.s.\ on $\set{F_1-F_2\neq\es}$.
 \item If $C<1$, then $F_1-F_2$ contains no interval a.s.
\end{itemize}
Here the  number $C$ is defined by
\begin{align}
  C := m_1(1+m_0) = \alpha\beta(1+\alpha^2+\beta^2) = p_0p_1(1+p_0^2+p_1^2).
\end{align}
\end{thm}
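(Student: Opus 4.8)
The plan is to obtain \thmref{2-adic central} as a short corollary of the limit analysis of $\seq{a_n}$ carried out just above, fed into \thmref{central} applied at a suitable order. The first point to settle is applicability: at the opening of \secref{2-adic Cantor sets} it was observed that the standing hypothesis $\norm{\vc{p}}_1=p_0+p_1>1$ (from \eqref{eq:no extinction}) forces $\mu(\{0,1\})>0$, so $\mu$ satisfies the joint survival condition (\cndref{joint event condition}); and by the propagation computation of \secref{higher order Cantor sets}, so does every $\mu^{(n)}$. Moreover $F_i^{(n)}\sim F_i$ and, by independence, $(F_1^{(n)},F_2^{(n)})\sim(F_1,F_2)$, so $F_1^{(n)}-F_2^{(n)}$ and $F_1-F_2$ have the same law; hence any almost sure interval or no-interval conclusion for $F_1^{(n)}-F_2^{(n)}$ --- including the version conditioned on $\{F_1^{(n)}-F_2^{(n)}\neq\es\}$ --- transfers verbatim to $F_1-F_2$.

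Next I would identify the minimal correlation coefficient $\gamma^{(n)}=\min_{k\in\alphabet^{(n)}}\gamn{k}$ with $a_n$: by periodicity the minimum over $\alphabet^{(n)}$ equals the minimum over $\integers$, which by \lemref{2-adic min seq} is attained at $k_n$, so $\gamma^{(n)}=\gamn{k_n}=a_n$. The linear recurrence \eqref{eq:a_n recurrence relation} with initial data \eqref{eq:a_n initial conditions} has already been solved, and this is precisely \eqref{eq:a_n lim}: $a_n\to0$ when $C:=m_1(1+m_0)<1$ and $a_n\to\infty$ when $C>1$. (The one bookkeeping point behind \eqref{eq:a_n lim} is that the dominant root $x_+$ of the characteristic polynomial is positive, strictly exceeds $\abs{x_-}$, crosses $1$ exactly when $C$ does, and occurs with a nonzero coefficient in the closed form for $a_n$ --- were that coefficient zero, $a_1$ would equal $x_-<0$, contradicting $a_1=2m_1>0$.)

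It then remains only to split into cases. If $C>1$, choose $n$ with $\gamma^{(n)}=a_n>1$; then $\gamn{k}>1$ for every $k\in\alphabet^{(n)}$, so \thmref{central}(a) applied to $F_1^{(n)},F_2^{(n)}$ gives that $F_1^{(n)}-F_2^{(n)}$ contains an interval a.s.\ on $\{F_1^{(n)}-F_2^{(n)}\neq\es\}$, hence $F_1-F_2$ contains an interval a.s.\ on $\{F_1-F_2\neq\es\}$. If $C<1$, choose $n$ with $\gamma^{(n)}=a_n<1$; then in particular $\gamn{k}<1$ and $\gamn{k+1}<1$ for some $k$, so \thmref{central}(b) gives that $F_1^{(n)}-F_2^{(n)}$, and therefore $F_1-F_2$, contains no interval a.s. I do not expect a genuine obstacle here: the substantive work has been front-loaded into \lemref{2-adic gamma bounds} and \lemref{2-adic min seq} --- which collapse the requirement ``$\gamn{k}>1$ for all $k$'' to the single scalar $a_n$ --- and into the recurrence analysis; the only things needing a little care are the distributional transfer between $F^{(n)}$ and $F$ and the sign bookkeeping for the closed-form coefficients. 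One could instead invoke \thmref{central with lsr}, since $\gamma^{(n)}=a_n$ forces $\lsr{\Sigma_\cM}$ and $C$ onto the same side of $1$, but that detour is unnecessary and would, via irreducibility, reintroduce the constant $R$ of \eqref{eq:R def} that the direct argument avoids.
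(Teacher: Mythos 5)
Your approach is the same as the paper's: apply \thmref{central} to a higher-order Cantor set of suitable order $n$, using the recurrence analysis of $a_n=\gamn{k_n}$ to decide which branch applies. The $C>1$ branch is handled correctly — $\gamma^{(n)}=a_n>1$ does indeed say all $\gamn{k}$ exceed $1$, and Theorem 4.2(a) applies.

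There is a small but genuine gap in the $C<1$ branch. You write ``choose $n$ with $\gamma^{(n)}=a_n<1$; then in particular $\gamn{k}<1$ and $\gamn{k+1}<1$ for some $k$.'' The hypothesis of \thmref{central}(b) requires \emph{two consecutive} correlation coefficients below $1$, and knowing only that the overall minimum $a_n=\gamn{k_n}<1$ at a particular $n$ does not by itself give you a neighbouring coefficient below $1$: the minimiser $k_n$ is odd, and both its even neighbours could a priori still be $\ge1$ at that level. What actually closes the gap is the companion observation in the paper immediately before the theorem: $\gamn{k'_n}=m_0\,a_{n-1}$ by \eqref{eq:2-adic min gamma aux expr}, and $k'_n$ is a neighbour of $k_n$. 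Since $a_n\to0$ when $C<1$, the sequence $a_{n-1}$ also tends to $0$, so for $n$ large enough \emph{both} $\gamn{k_n}<1$ and $\gamn{k'_n}<1$, which is exactly what (b) needs. So you must pick $n$ large enough that both hold, not merely the first $n$ at which $a_n<1$; note also that $a_n$ need not be monotone (its closed form has an alternating $x_-^n$ term with $x_-<0$), so ``the first $n$ with $a_n<1$'' is not guaranteed to work. With that correction your argument matches the paper's proof.
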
 %%%\index{theorem!2-adic}%
\begin{proof}
We will show that the  conditions for \thmref{central}
 do hold for Cantor sets of appropriate higher order. We already remarked that the JSC holds for 2-adic
 Cantor sets, and so it holds for all higher order Cantor sets.

If $m_1(1+m_0)<1$, then $\gamn{k_n}\to0$ and $\gamn{k'_n}\to0$ as $n\to\infty$,
 so for some $n$ large enough, $\gamn{k_n}$ and $\gamn{k'_n}$ are both strictly smaller than $1$.

If $m_1(1+m_0)>1$, then $\gamn{k_n}\to\infty$ as $n\to\infty$,
 so for some $n$ large enough, $\gamn{k_n}$ is strictly larger than $1$,
 and by \lemref{2-adic min seq} this holds for \emph{all} $\gamn{k}$.
\end{proof}

% Let us focus for a moment on the remaining case, $m_1(m_0+1)=1$
%  (which is equivalent to $\abs{x_+}=1$).
% Do higher order Cantor sets provide an answer in this case?
% Looking at equations \eqref{eq:a_n lim} and \eqref{eq:neighbour lim} this is the case if
%  either $c_+>1$ or $m_0c_+<1$.
%
% \TODO{Finish discussion of $m_1(m_0+1)=1$; show that $\frac{1}{m_0}\le c_+\le1$,
%   and conclude that higher order Cantor sets don't help here.
% (Note: computations with MatLab clearly showed that these inequality do hold.)}

\begin{remark}
The number $C$ in \thmref{2-adic central} is \emph{not} the spectral radius of the
collection $$\Sigma_\cM=\{\exmat{0} ,\exmat{1}\}.$$
In fact, let $\lambda_{PF}(A)$ denote the Perron-Frobenius eigenvalue of a matrix $A$.
It is well known that $\lambda_{PF}(A)$ is sandwiched between the smallest and the largest
 column sum of $A$. Combining this with \lemref{bounded skewness} we obtain
$$\gamn{k_n}\!=\!\min_{k\in\integers}\min\{\gamn{k}\!, \gamn{k+1}\}\le \min_{k\in\integers}\lambda_{PF}(\cM^{(n)}{k} )\le
    \frac1{R}\min_{k\in\integers}\min\{\gamn{k}\!, \gamn{k+1}\}\!=\!\frac1{R}\gamn{k_n}.$$
    According to Theorem B.1 of \cite{Gurvits}, $(\min_{k\in\integers}\lambda_{PF}(\exmat{k} ))^{1/n}$,
which is nothing else than the $n^{\mathrm{th}}$ root of the smallest spectral radius of all length $n$ products of matrices from $\Sigma$,
converges to $\lsr{\Sigma_\cM}$. It follows with our results above that the lower spectral radius of $\Sigma_\cM=\{\exmat{0} ,\exmat{1}\}$
is thus equal to
$$\lim_{n\rightarrow\infty} a_n^{1/n}=\lim_{n\rightarrow\infty} (c_+^{} x_+^n + c_-^{} x_-^n)^{1/n}=x_+=\shalf m_1 + \sqrt{m_1 m_0 + \sfrac{1}{4} m_1^2}.$$
This might be of independent interest: we have shown that for $\max\{2b,1-2b\}\le a\le 1+b^2$ the lower spectral radius of the collection consisting of
\begin{align} \eqlbl{two matrices}
  \exmat{0} &= \begin{bmatrix}a & b \\ 0 & b \end{bmatrix}, \quad \exmat{1} = \begin{bmatrix}b & 0\\ b & a \end{bmatrix},
\end{align}
is equal to $\shalf b + \shalf\sqrt{4ab +  b^2}$.
\end{remark}

\figref{symmetric 2-adic boundaries} gives an overview of boundaries
 in the space of vectors of marginal probabilities $\vc{p}$
 that separate areas  where different sets of conditions  imply the absence or presence of intervals.
The figure also indicates the area where the Palis conjecture fails,
 i.e., the area where \eqref{eq:Palis conjecture cond} does not imply that $F_1-F_2$ contains an interval
 (on $\set{F_1-F_2\neq\es}$).

\begin{figure}[h!]
\centering
\includegraphics*[width =8cm]{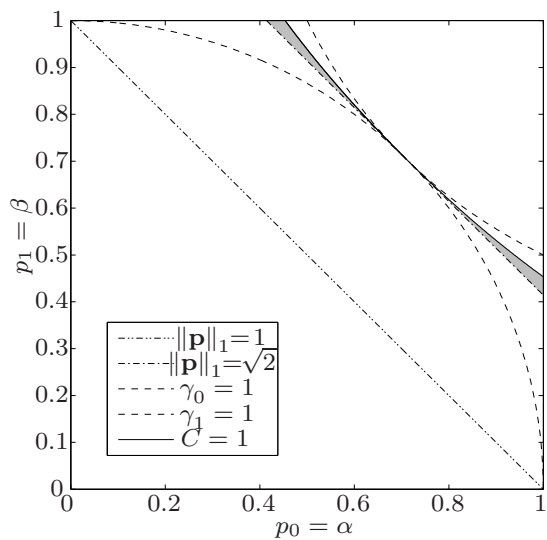}
\vspace*{-0.4cm}\caption{%
Classification of the $2$-adic symmetric algebraic difference in the $(p_0,p_1)$ plane:
 (1) Below $\norm{\vc{p}}_1=1$  a.s~ $F_1=F_2=\emptyset$.
 (2) Below $\norm{\vc{p}}_1=\sqrt{2}$ there are no intervals because $\Hdim(F_1-F_2)<1$.
 (3) Below $\gam{0}=1$ the `no intervals' part of \protect\thmref{central} holds.
 (4) Above $\gam{1}=1$ the `intervals' part holds.
 (5) The line $C=1$ is the separating boundary of \protect\thmref{2-adic central}.
The area where the Palis conjecture fails is shaded grey.}%
\figlbl{symmetric 2-adic boundaries}%
\end{figure}

% --------------------------------------------------
%  Chapter: A proof for the central theorem
% --------------------------------------------------

\section{A proof for the basic result} \seclbl{general proof}

In the following we will give a proof for part (a) of \thmref{central} (we already mentioned that the proof of part (b)
is much simpler, and follows closely the proof in \cite{DS}).
The proof of \thmref{central} is based on the following observations.
The process of $n$-th level $M$-adic squares that are surviving in the level $n$ approximations $\Lambda^n$
 inherits the self-similarity property of the individual random Cantor sets $F_1$ and $F_2$:
 conditional on the survival of an $n$-th level $M$-adic square $Q_{\tn,\tnj}$,
 the (scaled) process starting at this surviving square has the same distribution
 as the whole process, which starts at $[0,1]^2$.
%%%\index{self-similarity!algebraic difference}%
Moreover, conditional on the survival of a set of $n$-th level $M$-adic squares that are pairwise \emph{un}aligned,
 the processes in each of these squares are independent.

%%_%%
%%_%% So as soon as the left and right triangles along a fixed path $k_1,k_2,\dots$ almost surely die out,
%%_%%then by self-similarity many of such paths will occur and consequently a dense set of gaps appears in $\phi(\Lambda)$,
%%_%% which thus contains no intervals;
%%_%% conversely, if enough triangles are being generated in \emph{all} column pairs $(C^L_k,C^R_k)$,
%%_%% then almost surely there is an $M$-adic interval in $\phi(\Lambda)$
%%_%% --- in all subintervals of this interval the triangles did not die out. Note that we can not simply consider single columns,
The columns behave very inhomogeneously:
  for every $n$ there are  columns which contain at most \emph{one} triangle. This observation led to the
 idea in \cite{DS}  to pair unaligned left and right triangles
 that survive in the \emph{same} column into what are called \kw{$\Delta$-pairs}.
%%%\index{Delta-pair@$\Delta$-pair}%
%%_%%This pairing throws away a little `information':
%%_%% if in a certain column the number of left triangles is larger than the number of right triangles or vice versa,
%%_%% then the surplus triangles are not paired and are hence `forgotten'.
%%_%%Still, higher order Cantor sets can help to recover this information and consequently give an answer to
%%_%% the interval-or-not question, as will be shown in
%%_%% \secref{higher order Cantor sets} and \secref{spectral radius connection}.
The main idea  of the proof  for part (a)
%%_%%--- where we assume that $\gamma:=\min_{k\in\alphabet}\gam{k}>1$ ---
 is to show that with positive probability a $\Delta$-pair will occur in some column $C$ of some $\Lambda^m$ (\lemref{Delta-pair wpp}) ,
 and that conditional on this, the pairs in all subcolumns will grow exponentially,
 so the projection of the $\Delta$-pairs within $C\cap\Lambda$ will be an $M$-adic interval (\lemref{exponential growth}).
 We will follow the structure of the proof in  \cite{DS}, each lemma there corresponds to a section with lemma here.

% -----------------------
%  Section 8.1
% -----------------------
\subsection{Joint growth of $\Delta$-pairs}

%%_%%If we say that a $\Delta$-pair is `contained in' a certain subset of $[0,1]^2$,
%%_%% then this should be understood to mean that the \emph{union} of the triangles is contained in that subset.
%%_%%For example we say that the $\Delta$-pair above is contained in the column $C$,
%%_%% and if all pairs survived up to level $n$, then we say it is contained in $\Lambda^n$.
%%_%%Similarly, the `intersection' of a $\Delta$-pair with a subset of $[0,1]^2$
%%_%% is the intersection of the \emph{union} of the triangles with that subset.
With the `$\tnks$-th subcolumn of a level $m$ $\Delta$-pair' $(L^m,R^m)$
 that is contained in a column $C={C^U_{\tmls}}$ we will indicate
\begin{align*}
  C^U_{\tmls\tnks} \intersec (L^m \cup R^m),
\end{align*}
the intersection of the $\Delta$-pair with ${C^U_{\tmls\tnks}}$, the $\tnks$-th subcolumn of ${C^U_{\tmls}}$.

For such a $\Delta$-pair $(L^m,R^m)$,
 the distribution of the number of level $m+n$ $V$-triangles
 surviving in $\Lambda^{m+n}$
 in the $\tnks$-th subcolumn of $(L^m,R^m)$,
 conditional on the survival of $(L^m,R^m)$ in $\Lambda^m$,
 is independent of $m$, the particular choice of the column $C$ and the $\Delta$-pair in it.
Therefore, we can unambiguously denote a random variable having this distribution by
\begin{align} \eqlbl{tilde Z}
  \tilde{Z}^{V}(\tnks)
\end{align}
for all $V\in\{L,R\}$ and $\tnks\in\tree$.

 \begin{figure}[t]
\centering
\includegraphics*[width =11cm]{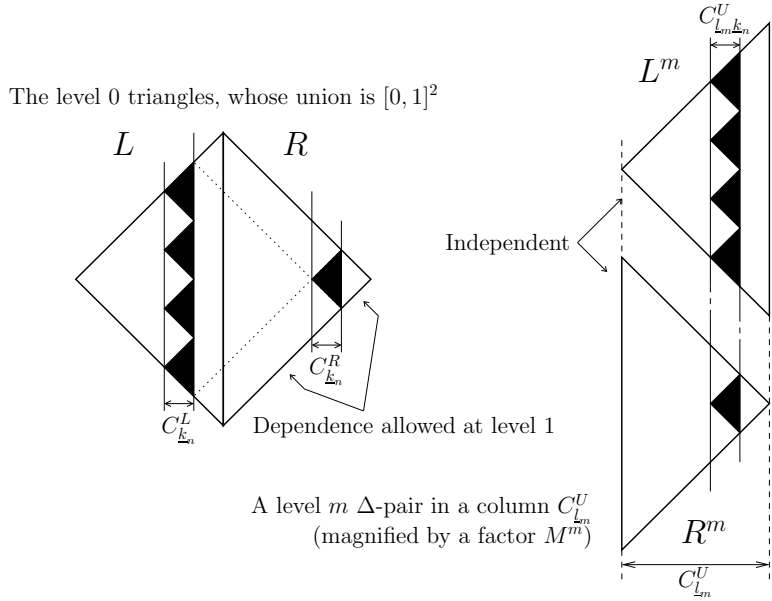}
\caption{The distinction between level $0$ triangles and a $\Delta$-pair.}%
\figlbl{level 0 pairs vs}%
\end{figure}

In general $\tilde{Z}^{V}(\tnks)$ does not have the distribution of $Z^{V}(\tnks)$
 because there \emph{is} possible dependence between the offspring generation of the two level $0$ triangles,
 whereas there is \emph{no} dependence between the offspring generation of the $L$-triangle and the $R$-triangle of a level $m$ $\Delta$-pair
 by unalignedness.
Essentially, $\tilde{Z}^{V}(\tnks)$ has the distribution of
 the sum of \emph{independent} copies of the random variables $Z^{RV}(\tnks)$ and $Z^{LV}(\tnks)$,
 but $Z^{V}(\tnks)$ is just the sum of these random variables, which may be dependent.
(Compare the level $0$ triangles with the $\Delta$-pair in \figref{level 0 pairs vs}.)
Clearly,  the $Z^{UV}(\tnks)$ are dependent if they count triangles that are aligned, e.g.,
$Z^{RR}(\tnks)$ and $Z^{LL}(\tnks)$ will in general not be independent. However, $Z^{RV}(\tnks)$ and $Z^{LV}(\tnks)$
will never be aligned.
But there can also be dependence between the offspring of  level $0$ squares
 --- if that is dictated by the joint survival distributions ---
 which induces dependence between the random variables $Z^{UV}(\tnks)$, at any level $n$.
See  \figref{level 0 pairs vs}.
Despite these differences, both do have the same expected value (by linearity of expectations), and this is all that is needed in the proof.
%%_%% if $\tilde{Z}^{V}(\tnks)\sim\tuplezlaw^{V}(\tnks)$, then
%%_%% \begin{align}
%%_%%                    \ex \tilde{Z}^{V}(\tnks)
%%_%%  &=                \cex{\setcard{\set{\vc{\st{i}}_{m+n}\in\tree[m+n]^2 :
%%_%%                          U_{j;\vc{\st{i}}_{m+n}} \subseteq C^U_{\tmls\tnks} \intersec \Union_{i\in\cI} U^m_i }}}
%%_%%                       {U^m_i \subseteq \Lambda^m, i\in\cI} \displaybreak[0] \notag \\
%%_%% &= \sum_{i\in\cI} \cex{\setcard{\set{\vc{\st{i}}_{m+n}\in\tree[m+n]^2 :
%%_%%                          U_{j;\vc{\st{i}}_{m+n}} \subseteq C^U_{\tmls\tnks} \intersec U^m_i }}}
%%_%%                       {U^m_i \subseteq \Lambda^m, i\in\cI} \displaybreak[0] \notag \\
%%_%% &= \sum_{i\in\cI} \ex Z^{U_iU_j}(\tnks)
%%_%%  =                \ex Z^{U_j}(\tnks), \eqlbl{expectations of tilde Z and Z match}
%%_%%\end{align}
%%_%%for all $j\in\cI$ and $\tnks\in\tree$,
%%_%% and for any $U\in\cU$, $\tmls\in\tree$ and
%%_%% level $m$ $\Delta$-pair $(L^m,R^m)$ that is contained in $\smash{C^U_{\tmls}}$.

Let
\begin{align} \eqlbl{N(k_n) def}
 \tilde{N}(\tnks) = \min\{\tilde{Z}^{L}(\tnks),\tilde{Z}^{R}(\tnks)\}
\end{align}
for all $\tnks\in\tree$.
This is the distribution of the minimum number of triangles of each triangle type
 that survive in the $\tnks$-th subcolumn of a  $\Delta$-pair.
The next lemma states that with positive probability
 the growth of $\tilde{N}(\tnks)$ for all $\tnks$ up to a certain level $n$ is exponential.

\begin{lem} (Extension of Lemma 1 in \cite{DS})\lemlbl{finite exponential growth}
If $\gamma>1$, and the joint survival distribution(s) satisfy the joint survival condition, then for all $n\ge0$
\begin{align*}
 \prob{\tilde{N}(\tmks) \ge \gamma^m \text{ for all } \tmks\in\tree[m] \text{ for all } 0\le m \le n} > 0.
\end{align*}
\end{lem}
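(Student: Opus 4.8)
The plan is to prove this by induction on $n$, with the case $n=0$ being essentially trivial: one only needs that a single $\Delta$-pair has both a surviving $L$-triangle and a surviving $R$-triangle with positive probability, which is where the joint survival condition enters. First I would fix the level-$0$ $\Delta$-pair as a pair of unaligned squares $Q_{\tns,\tnjs}$ and $Q_{\tns',\tnjs'}$ lying in a common column, one contributing (after 45-degree projection) an $L$-triangle and the other an $R$-triangle; the JSC guarantees that a column containing such a configuration survives with positive probability, so $\mathbb{P}(\tilde N(\es)\ge 1)>0$, and $\gamma^0=1$.

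For the inductive step, suppose the statement holds at level $n$. The idea is to condition on the event $A_n$ that $\tilde N(\tmks)\ge\gamma^m$ for all $\tmks\in\tree[m]$ and all $0\le m\le n$; by the inductive hypothesis $\mathbb{P}(A_n)>0$. On $A_n$, inside each level-$n$ column $C^U_{\tnks}$ (with $\tnks\in\tree[n]$) we have at least $\gamma^n$ surviving $L$-triangles and at least $\gamma^n$ surviving $R$-triangles of the $\Delta$-pair. Each such surviving level-$n$ triangle is a (scaled) copy of a level-$0$ triangle, and by self-similarity and the unalignedness of triangles of opposite type, the numbers of level-$(n+1)$ $V$-triangles they spawn in each subcolumn $C^U_{\tnks k}$ are governed by independent copies of $Z^{RV}(k)$ and $Z^{LV}(k)$ — which is exactly the distribution defining $\tilde Z^V(k)$. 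Thus, conditionally on $A_n$, for each of the $M$ subcolumns the total count of surviving level-$(n+1)$ $V$-triangles stochastically dominates a sum of at least $\gamma^n$ independent copies of a random variable with mean $\mathbb{E}Z^V(k)=\gamma_k\ge\gamma$ (using \lemref{exmat sum to gamma} and $\gamma=\min_k\gamma_k>1$). By the law of large numbers (or a crude second-moment/Chebyshev estimate), such a sum is at least $\gamma^{n+1}=\gamma\cdot\gamma^n$ with probability bounded below by a constant that can be made close to $1$ by taking $\gamma^n$ large — but here $\gamma^n$ need not be large, so instead I would argue that for \emph{each fixed} $n$ the relevant event simply has \emph{positive} probability: the event that every one of the finitely many sums (over the finitely many subcolumns $\tnks k$, $\tnks\in\tree[n]$, $k\in\alphabet$) exceeds $\gamma^{n+1}$ has positive probability because each summand can individually be large with positive probability and these large-deviation-type events, while not independent across aligned configurations, still jointly occur with positive probability (in the worst case, force enough offspring in each relevant square; the joint survival condition again ensures the needed configurations are not probability zero).

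Combining, $\mathbb{P}(A_{n+1})\ge\mathbb{P}(A_n)\cdot c_n$ for some $c_n>0$, so $\mathbb{P}(A_{n+1})>0$, completing the induction. The main obstacle is the conditional step: one must carefully justify that, given $A_n$, the level-$(n+1)$ triangle counts in the subcolumns behave like sums of \emph{independent} copies of $Z^{LV}(k),Z^{RV}(k)$ and are not spoiled by the dependence coming from aligned squares or from the joint survival distribution. This is precisely the point where the passage from $Z^V$ to $\tilde Z^V$ matters, and where one leans on the unalignedness of opposite-type triangles together with the self-similarity and independence-on-unaligned-squares properties recorded before \secref{general proof}. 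Getting the quantitative bound "exceeds $\gamma^{n+1}$ with positive probability" right — rather than merely "has positive expectation" — requires choosing the conditioning event so that enough offspring are guaranteed, and this bookkeeping (parallel to Lemma 1 of \cite{DS}) is the technical heart of the argument.
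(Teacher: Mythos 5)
Your overall framing (induction on $n$, conditioning on the event $A_n$) is a genuinely different organization from the paper's, which uses a single conditioning argument with no induction. However, your inductive step has a genuine gap that you yourself flag, and the gap is not cosmetic: the claim that the finitely many events ``total count in subcolumn $C^U_{\tilde k_n k}$ exceeds $\gamma^{n+1}$'' jointly hold with positive probability given $A_n$ does not follow from the fact that each holds individually with positive probability, and the ``stochastic domination by a sum of independent copies'' framing is precisely what fails because of alignment-induced dependence. Your parenthetical remark ``in the worst case, force enough offspring in each relevant square'' is the right instinct but is left undeveloped, and developing it properly dissolves the induction: one does not need to bound sums of dependent random variables at all.

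The paper instead conditions on the single explicit event that, in the first $n$ sublevels of the $\Delta$-pair, \emph{every} triangle with positive marginal survival probability actually survives. The JSC is exactly what guarantees this event has positive probability (with bound $(\mathbb{P}(J_1))^{2(1+a+\dots+a^{n-1})}$ where $\mathbb{P}(J_1)=\mu(\text{msupp}(\mu))\lambda(\text{msupp}(\lambda))$). On this event the process coincides with the process driven by the distribution $\mu^*$ concentrated on $\text{msupp}(\mu)$, under which the counts $\tilde Z^{L},\tilde Z^{R}$ are \emph{deterministic} and hence equal to their expectations. One then bounds these expectations below directly via the matrix identity $\vc{e}\,\cM(k_1)\cdots\cM(k_m)\ge\gamma^m\vc{e}$ (a repeated application of \lemref{exmat sum to gamma} and $\gam{k}\ge\gamma>1$). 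No law of large numbers, no Chebyshev, no stochastic-dominance step, and no induction on $n$ is required. Your base case discussion is also slightly off: $\tilde N(\es)$ is identically $1=\gamma^0$ given the $\Delta$-pair survives (each $\Delta$-pair has exactly one $L$- and one $R$-triangle by definition), so no JSC is needed at level $0$; the JSC enters only to make the all-eligible-survive event have positive probability at deeper levels. The existence of a $\Delta$-pair with positive probability is a separate matter handled in \lemref{Delta-pair wpp}, not here.
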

\begin{proof}
The  proof is  similar to the proof of Lemma 1 in \cite{DS},
 but it shows where and how the JSC emerges for general survival distributions.

For a joint survival distribution $\mu$
 we uniquely define the joint survival distribution $\mu^*$ by requiring that $\mu^*(\msupp{\mu}) = 1$.
(See \eqref{eq:msupp def} for the definition of the marginal support $\msupp{\mu}$.)
For example, if $\vc{p}=(1,\tfrac{1}{\pi},0,\shalf)$,
 then $\msupp{\mu}=\set{0,1,3}$ and $\vc{p}^*=(1,1,0,1)$.
Note that $\msupp{\mu} = \msupp{\mu^*}$.
We obtain $\mu^*,\lambda^*$ from $\mu, \lambda$ in this way.
We mark all entities that refer to the substitution of   $\mu,\lambda$ by $\mu^*,\lambda^*$ with a $^*$ superscript.
Note that we have $\cM^*(\tnks)\ge\cM(\tnks)$ component-wise for all $\tnks\in\cT$.

Consider for each $n\ge0$ the event
\begin{align*}
  J_n &:=  \Intersec_{m=0}^n \bigcup\set{Q_{\tms,\tmjs} \subseteq \Lambda^m: \tms,\tmjs \in \tree[m]\;with\; p_\tms>0,p_\tmjs>0}.
\end{align*}
This is the event that in the first $n$ steps of the construction,
 all squares that have positive marginal survival probability do survive all jointly.
Let $a$ be the \emph{number} of level 1 squares having positive marginal survival probability.
For each $n\ge0$ the number of such squares at level $n$ is given by $a^n$
 and the event $J_n$ has probability $\prob{J_n} = \prob{J_1}^{1+a+\dots+a^{n-1}}$where $\prob{J_1}=\mu(\msupp{\mu})\lambda(\msupp{\lambda})$.
At this point we use the joint survival condition, since it implies that $\prob{J_1}>0$ and thus $\prob{J_n}>0$.
Note that by construction we have $\probstar{J_n}=1$.

Let $\tilde{N}(\tmks)= \min\{\tilde{Z}^{L}(\tmks),\tilde{Z}^{R}(\tmks)\} $.
By the self-similarity of the process and the requirement that
 the process runs independently in the triangles of a $\Delta$-pair,
 the event that in the first $n\ge0$ sublevels of the surviving $\Delta$-pair
 all triangles (in the $\Delta$-pair) that have positive probability to survive, do survive simultaneously,
 occurs with at least probability $(\prob{J_n})^2>0$.
Conditional on this latter event --- which has positive probability ---
 the following chain of component-wise (in)equalities holds:
\begin{align*}
  %   \hat{N}(\tmks)
  %&= \hat{N}^*(\tmks)
&\se \begin{bmatrix}     \tilde{Z}^{L}   (\tmks) \;\;     \tilde{Z}^{R}   (\tmks) \end{bmatrix}
   = \begin{bmatrix}    {\tilde{Z}^{*;L}}(\tmks) \;\;     {\tilde{Z}^{*;R}}(\tmks) \end{bmatrix} \notag\\
  &= \begin{bmatrix} \ex{\tilde{Z}^{*;L}}(\tmks)\;\;   \ex{\tilde{Z}^{*;R}}(\tmks) \end{bmatrix}
   = \begin{bmatrix} \ex{      {Z}^{*;L}}(\tmks)\;\;   \ex{      {Z}^{*;R}}(\tmks) \end{bmatrix} \notag\\
  &= \vc{e} \cM^*(\tmks)
 \ge \vc{e} \cM  (\tmks)
 \ge \gamma \vc{e} \cM(k_2) \cdots \cM(k_m)\ge\dots
 \ge \gamma^m \vc{e}, \eqlbl{exmat col sum gamma growth}
\end{align*}
 for all $0\le m\le n$ and $\st{k}_m\in\cT_m$, and where $\vc{e}$ is the $2$-dimensional all-one row vector.
This directly implies the statement of the lemma.
\end{proof}

% -----------------------
%  Section 8.2
% -----------------------
\subsection{Existence of a $\Delta$-pair}

\begin{lem} (Extension of Lemma 2 in \cite{DS})\lemlbl{Delta-pair wpp}
If $\gamma>1$, then
\begin{align*}
  p_{\Delta} := \prob{\exists m\ge0 \text{ s.t.\ there exists a level }m\text{ $\Delta$-pair in }\Lambda^m} > 0.
\end{align*}
\end{lem}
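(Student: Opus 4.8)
The plan is to argue by contradiction: assume $p_\Delta=0$, so that almost surely no $\Delta$-pair ever appears, and show that this forces a uniform bound on the number of triangles in any column at any level, contradicting the exponential growth of expected triangle counts coming from $\gamma>1$.

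\emph{Growth input.} Fix once and for all an infinite path $k_1,k_2,\dots$ in $\tree$ and, at each level $n$, look at the two columns $C^L_{k_1\dots k_n}$ and $C^R_{k_1\dots k_n}$. By \lemref{exmat sum to gamma} together with \eqref{eq:exmat expansion} and \eqref{eq:gamma as matrix product}, the row vector $\vc{e}\,\exmat{k_1}\cdots\exmat{k_n}$ has as its entries exactly the expected numbers of level-$n$ $L$- and $R$-triangles contained in these two columns. Since by \lemref{exmat sum to gamma} the entries of $\vc{e}\,\exmat{k}$, $k\in\alphabet$, are among $\gamma_0,\dots,\gamma_{M-1}$ and hence all $\ge\gamma$, i.e.\ $\vc{e}\,\exmat{k}\ge\gamma\vc{e}$ componentwise, the same telescoping used in the proof of \lemref{finite exponential growth} gives $\vc{e}\,\exmat{k_1}\cdots\exmat{k_n}\ge\gamma^n\vc{e}$ componentwise. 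In particular $\ex Z^{L}(k_1\dots k_n)\ge\gamma^n\to\infty$ because $\gamma>1$.

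\emph{Structural step.} The key claim is: if for some $m\ge0$ and some $k$ the event ``the column $C_m(k)$ contains at least two surviving $L$-triangles'' (or at least two surviving $R$-triangles) has positive probability, then with positive probability a level-$(m{+}1)$ $\Delta$-pair occurs. Two distinct surviving triangles of the same type in a common column come from squares lying on one common diagonal, so distinct first coordinates force distinct second coordinates; the two squares are therefore unaligned, and conditioned on their joint survival the construction runs independently inside them. It then suffices to point to two sub-squares, one inside each of the two squares, that land in one common level-$(m{+}1)$ subcolumn, contribute there one $L$-triangle and one $R$-triangle, are unaligned, and survive with positive conditional probability; the last requirement asks for sub-index pairs $(s,t)$ with $p_s>0$ and $q_t>0$, and the needed indices are supplied by $\gamma>1$: from $\gamma_0>1$ there is $i_\ast$ with $p_{i_\ast},q_{i_\ast}>0$, and from $\gamma_{M-1}=\sum_i q_ip_{i-1}>1$ (at least two positive summands) there is $a_0\le M-2$ with $p_{a_0}>0$ and $q_{a_0+1}>0$. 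For two surviving $L$-triangles in $C_m(k)$ coming from squares $Q_{u,v},Q_{u',v'}$ (so $\radix{u}-\radix{v}=\radix{u'}-\radix{v'}=k+1$, $u\ne u'$, $v\ne v'$), one checks that the sub-square $Q_{u a_0,\, v(a_0+1)}$ of $Q_{u,v}$ lies on the diagonal $Mk+M-1$ and hence puts its $R$-triangle in $C_{m+1}(Mk+M-1)\subseteq C_m(k)$, while the sub-square $Q_{u' i_\ast,\, v' i_\ast}$ of $Q_{u',v'}$ lies on the diagonal $Mk+M$ and puts its $L$-triangle in the same subcolumn $C_{m+1}(Mk+M-1)$; this $L$/$R$ pair is unaligned since $u\ne u'$ and $v\ne v'$, and, conditioned on $Q_{u,v},Q_{u',v'}\subseteq\Lambda^m$, the two sub-squares survive independently with probability $p_{a_0}q_{a_0+1}$ resp.\ $p_{i_\ast}q_{i_\ast}$, hence with positive joint probability. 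The two-$R$-triangle case is symmetric (use the indices furnished by $\gamma_1>1$ instead). I expect the only genuinely delicate point of the whole proof to be this bit of bookkeeping — keeping straight which half of a subdivided square sits in which subcolumn, and in which cases the resulting triangle still lies inside $C_m(k)$.

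\emph{Conclusion.} If $p_\Delta=0$, then the conclusion of the structural step fails for every $m$ and $k$, so ``$C_m(k)$ contains two triangles of the same type'' has probability zero for each of the countably many pairs $(m,k)$; taking the union, almost surely every column at every level carries at most one $L$-triangle and at most one $R$-triangle. Hence $Z^{L}(k_1\dots k_n)\le2$ almost surely for every $n$, so $\ex Z^{L}(k_1\dots k_n)\le2$, contradicting $\ex Z^{L}(k_1\dots k_n)\ge\gamma^n\to\infty$. Therefore $p_\Delta>0$.
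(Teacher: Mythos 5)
Your proof is correct, and it takes a genuinely different route from the paper. The paper's proof is a direct, two-step construction: from $\gamma_0>1$ it extracts two distinct diagonal squares $Q_{a,a},Q_{b,b}$ that survive jointly with positive probability (using that $\gamma_0=\sum_i\prob{Q_{i,i}\subseteq\Lambda^1}$ is the expectation of a count bounded by $M$, so the count is $\ge 2$ with positive probability), and from $\gamma_1>1$ it extracts a just-off-diagonal index $c$ with $\prob{Q_{c,c-1}\subseteq\Lambda^1}>0$; then $R_{aa,aa}$ and $L_{bc,b(c-1)}$ form a level-$2$ $\Delta$-pair in $C_2(0)$ with positive probability, so one can take $m=2$. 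Your proof instead argues by contradiction: if $p_\Delta=0$, your structural lemma (two surviving same-type triangles in a column force, with positive probability, a $\Delta$-pair one level down) shows that almost surely every column at every level carries at most one triangle of each type, hence $Z^L(\tnks)\le2$ a.s.\ and $\ex Z^{L}(\tnks)\le 2$, contradicting the componentwise bound $\vc{e}\,\exmat{k_1}\cdots\exmat{k_n}\ge\gamma^n\vc{e}$ which forces $\ex Z^{L}(k_1\dots k_n)\to\infty$. I checked the bookkeeping in your structural step: the $R$-triangle of $Q_{ua_0,v(a_0+1)}$ sits on diagonal $Mk+M-1$ and the $L$-triangle of $Q_{u'i_\ast,v'i_\ast}$ on diagonal $Mk+M$, both landing in $C_{m+1}(Mk+M-1)$; the unalignedness and positivity of the conditional survival probabilities check out, and your extraction of suitable indices from $\gamma_0,\gamma_{M-1},\gamma_1>1$ is correct. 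The trade-off: the paper's argument is shorter and pins down the $\Delta$-pair at level $m=2$ explicitly, which is useful when one wants quantitative control; your argument is less local but isolates a clean structural fact (``crowded column $\Rightarrow$ $\Delta$-pair next level'') and ties the existence of $\Delta$-pairs directly to the expected exponential growth driven by $\gamma>1$, which is arguably a more conceptual explanation of \emph{why} $\Delta$-pairs must form.
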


\begin{proof}
We will show that one can take $m=2$:  if $\gamma>1$, then
\begin{align*}
  \prob{\text{there exists a level }2\text{ $\Delta$-pair in }C^{R}_{00}\cap \Lambda^2} > 0.
\end{align*}
The corresponding geometric structure is  depicted in \figref{projection}.

From $\gam{0}=\sum_{i=0}^{M-1} \prob{Q_{i,i} \subseteq \Lambda_1}>1$ we may conclude
  that there exist distinct $a,b\in\{0,\dots,M-1\}$ such that
 $$p^{ab}:=\prob{Q_{a,a}, Q_{b,b} \subseteq \Lambda^1}>0.$$
%%_%% This is not entirely trivial, to draw from $\prob{A_1}+\prob{A_2} > 1$ the conclusion that $\prob{A_1,A_2}>0$.
%%_%% It is trivial if the events $A_1, A_2$ are independent,
%%_%% but for other cases one essentially uses the joint event lemma (with $n=1$).\BKr{}

From $\gam{1}=\prob{Q_{0,M-1} \subseteq \Lambda^1} + \sum_{i=1}^{M-1} \prob{Q_{i,i-1} \subseteq \Lambda^1}>1$
 we may conclude that there exists at least one $c\in\{1,\dots,M-1\}$ such that
 $p^{c}:=\prob{Q_{c,c-1} \subseteq \Lambda^1}>0$.

Using the independence of the process in the unaligned squares $Q_{a,a}$ and $Q_{b,b}$ and
 the self-similarity of the process, we now conclude that
\begin{align*}
 &\se\prob{ Q_{aa,aa},Q_{bc,b(c-1)}\subseteq\Lambda^2} \\
 & = \cprob{Q_{aa,aa},Q_{bc,b(c-1)}\subseteq\Lambda^2}{Q_{a,a},Q_{b,b}\subseteq\Lambda^1}
                                                 \prob{Q_{a,a},Q_{b,b}\subseteq\Lambda^1} \\
 & = \cprob{Q_{aa,aa}              \subseteq\Lambda^2}{Q_{a,a},Q_{b,b}\subseteq\Lambda^1}
     \cprob{          Q_{bc,b(c-1)}\subseteq\Lambda^2}{Q_{a,a},Q_{b,b}\subseteq\Lambda^1} p^{ab} \\
 & = \prob{Q_{a,a}   \subseteq\Lambda^1}
     \prob{Q_{c,c-1} \subseteq\Lambda^1} p^{ab}
 \ge p^{ab}p^{c}p^{ab} > 0,
\end{align*}
which finishes the proof, as $(L_{bc,b(c-1)},R_{aa,aa})$ is a level $2$ $\Delta$-pair in $C^{R}_{00}$.
\end{proof}

% -----------------------
%  Section 8.3
% -----------------------
\subsection{Unaligned triangles}
\seclbl{unaligned}

The following lemma is purely combinatorial, and serves to obtain independence between the triangles in a $\Delta$-pair.

\begin{lem}(Lemma 3 in \cite{DS}) \lemlbl{grouping}
We are given $N$ distinct odd numbers $o_1, \dots, o_N$ and $N$
distinct even numbers $e_1, \dots , e_N$. Then we can couple the
odd numbers with the even numbers and we can color the $N$ couples
with three colors (say $\mathtt{r},\mathtt{g}$ and $\mathtt{b}$)
such that no two numbers in pairs of the same color are adjacent
and all colors are used for at least $\lfloor N/3\rfloor $ pairs.
That is, there exists a permutation $\pi $ of $\left\{1,\dots
,N\right\}$ such that we can color the pairs
$$
(e_1,o_{\pi (1)}),\dots ,(e_N,o_{\pi (N)})
$$
with the three colors such that with each color we painted at least
$\lfloor N/3\rfloor$ pairs and
 for any (also if $\ell=k$)
$(e_k,o_{\pi (k)})$ and $(e_{\ell},o_{\pi (\ell)})$ having the
same color it is true that:
\begin{equation*}
|e_{\ell}-o_{\pi (k)}|>1.
\end{equation*}
\end{lem}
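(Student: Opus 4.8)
Our plan is to peel off the ``pairing plus colouring'' packaging and recognise the statement as a balanced proper-colouring problem for a very simple graph.

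First, $e_\ell-o_{\pi(k)}$ is always odd, so ``$|e_\ell-o_{\pi(k)}|>1$'' merely says $e_\ell$ and $o_{\pi(k)}$ are not consecutive integers; call two integers \emph{neighbours} when they differ by $1$. Second, the pairing \emph{within} a colour class is irrelevant: given a colouring $\chi$ of the $N$ pairs, put $O_c=\{o_{\pi(k)}:\chi(k)=c\}$ and $E_c=\{e_k:\chi(k)=c\}$ for $c\in\{\mathtt r,\mathtt g,\mathtt b\}$; then the colouring condition says precisely that no element of $O_c$ is a neighbour of an element of $E_c$, and conversely any partition of the odd numbers into $O_\mathtt r,O_\mathtt g,O_\mathtt b$ and of the even numbers into $E_\mathtt r,E_\mathtt g,E_\mathtt b$ with $|O_c|=|E_c|$ for all $c$ yields an admissible $(\pi,\chi)$ upon pairing $O_c$ with $E_c$ arbitrarily. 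Let $G$ be the graph on the $2N$ given numbers with an edge between each pair of neighbours; every edge of $G$ joins an even to an odd number. Thus the lemma is equivalent to the assertion that \emph{$G$ admits a proper $3$-colouring in which each colour class contains equally many odd and even vertices, that common number being at least $\lfloor N/3\rfloor$.}

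Now $G$ is a disjoint union of paths --- one for each maximal block of consecutive integers occurring among the $2N$ numbers --- and along each such path the parities strictly alternate. This is where three colours are needed: a proper colouring of one such path using only two colours is forced to paint \emph{all} its odd vertices one colour and all its even vertices the other, which cannot be balanced; but painting one interior vertex with the third colour splits the path into two shorter alternating paths that can be two-coloured independently, and iterating this shows that a single path of $p$ odd and $q$ even vertices admits a proper $3$-colouring realising essentially any prescribed distribution of its $p$ odds and $q$ evens over the three colours. The global colouring we would then obtain by processing the blocks of $G$ one at a time, keeping track of the running triple of (odd-count, even-count) pairs and choosing each block's colouring so as to drive the totals towards the target triple $(n_\mathtt r,n_\mathtt g,n_\mathtt b)$ with $n_\mathtt r+n_\mathtt g+n_\mathtt b=N$ and each $n_c\in\{\lfloor N/3\rfloor,\lceil N/3\rceil\}$; the blocks of length $\ge3$ are flexible enough, and the blocks of length $1$ or $2$ can be absorbed, to reach it.

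The conceptual step is the reformulation; the effort goes into that last sentence. The only genuine obstacle is the short blocks --- isolated vertices and single edges --- which cannot be split and hence each inject a fixed unit of imbalance into one colour class; one must check that the bounded net imbalance they create can always be cancelled using the longer blocks (or, when all blocks are short, directly), while keeping every colour class at $\lfloor N/3\rfloor$ or $\lceil N/3\rceil$ vertices of each parity. Carrying out this finite case analysis --- which also amounts to determining exactly which count-distributions a path of given length admits under a proper $3$-colouring --- is the substance of the argument.
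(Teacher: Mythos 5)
The paper itself does not prove this lemma: it only cites it as Lemma~3 of \cite{DS}, so there is no internal proof to compare against, and your argument should be assessed on its own merits.

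Your reformulation is correct and clean. Since $e_\ell - o_{\pi(k)}$ is always odd, adjacency only happens between an odd and an even number differing by~$1$; the pairing inside a colour class is indeed irrelevant (pair the odds in $O_c$ with the evens in $E_c$ arbitrarily once $|O_c|=|E_c|$); and the graph $G$ on the $2N$ numbers with edges between neighbours is a disjoint union of parity-alternating paths. So the lemma is equivalent to the existence of a proper $3$-colouring of $G$ in which each colour class has $|O_c|=|E_c|\ge\lfloor N/3\rfloor$. That step is real progress.

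But the proposal stops exactly where the mathematics has to start. The claim that ``a single path of $p$ odd and $q$ even vertices admits a proper $3$-colouring realising essentially any prescribed distribution'' is both vague and, taken literally, false: a single path of length $4$, for instance, admits no proper $3$-colouring with $|O_c|=|E_c|$ for every $c$ (every proper colouring of $o_1e_1o_2e_2$ has a nonzero imbalance vector). So you cannot balance path by path; you must determine exactly which imbalance vectors a path of given length can realise, and then show that the multiset of imbalance vectors arising from the blocks of $G$ can be chosen to sum to zero \emph{while simultaneously} keeping each colour class at $\lfloor N/3\rfloor$ or $\lceil N/3\rceil$ pairs --- a genuinely two-sided constraint. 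The ``process blocks one at a time, steering towards the target'' greedy strategy is not obviously sound (nothing guarantees a given block's feasible set contains a step in the desired direction), and the problematic cases you yourself flag --- isolated vertices and single edges, and in particular the extreme where all $2N$ numbers form a single run --- are precisely the ones that need the case analysis. You say explicitly that carrying this out ``is the substance of the argument''; I agree, and it is missing. As it stands this is a promising reduction plus a plan, not a proof.
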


% --------------------------
%  Section 8.4
% --------------------------
\subsection{Exponential growth}
\seclbl{a general proof}

\begin{lem}(Adaptation of Lemma 4 of \cite{DS}) \lemlbl{exponential growth}

If $\gamma>1$, then there exists an $1<\eta<\gamma$ such that
\begin{align*}
 p_I := \prob{\tilde{N}(\tnks)\ge\eta^n \text{ for all } \tnks\in\tree[n] \text{ for all } n\ge0} > 0.
\end{align*}
\end{lem}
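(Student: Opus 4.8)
The plan is to bootstrap from the finite-level estimate of \lemref{finite exponential growth} to an infinite-level estimate by setting up a supercritical branching-type comparison in each subcolumn, and then summing up failure probabilities. First I would fix an $\eta$ with $1<\eta<\gamma$; the gap between $\eta$ and $\gamma$ is the slack that will absorb the probabilistic fluctuations. The key structural input is the self-similarity noted in the excerpt: conditional on a level $m$ $\Delta$-pair surviving, the triangle counts $\tilde Z^{V}(\st k_n)$ in its $\st k_n$-th subcolumn are distributed as sums of \emph{independent} copies of $Z^{RV}$ and $Z^{LV}$, and, crucially, once we descend into genuinely unaligned triangles (as guaranteed by the unalignedness in a $\Delta$-pair, together with \lemref{grouping} of \cite{DS} for re-pairing at deeper levels), the subprocesses run independently. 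So the quantities $\tilde N(\st k_n)$ dominate a multitype branching process whose mean matrix along any path is the product $\cM(k_1)\cdots\cM(k_n)$, which by \lemref{exmat sum to gamma} has column sums $\gamn{k}\ge\gamma^n>\eta^n$.

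The central step is then a large-deviation / Borel--Cantelli argument. Using \lemref{finite exponential growth}, pick $n_0$ large enough that the event
\[
  A_{n_0} := \{\tilde N(\tms)\ge\gamma^m \text{ for all } \tms\in\tree[m],\ 0\le m\le n_0\}
\]
has positive probability, say $p_0>0$. Conditional on $A_{n_0}$ we have at least $\gamma^{n_0}$ disjoint, unaligned $\Delta$-pairs at level $n_0$ in each subcolumn; by self-similarity each of these independently has probability $p_0$ of producing at least $\gamma^{n_0}$ grandchildren in each of \emph{its} sub-subcolumns. Iterating, the number $Y_j(\st k)$ of surviving level $jn_0$ $\Delta$-pairs in a given subcolumn $C^U_{\st k}$ (with $\st k$ of length $jn_0$) stochastically dominates a Galton--Watson process with offspring mean $\gamma^{n_0}p_0$; choosing $n_0$ so large that $\gamma^{n_0}p_0>\eta^{n_0}$ (possible since $\gamma>\eta$) makes this supercritical, and in fact one wants the stronger statement that with probability bounded below uniformly in the subcolumn, $Y_j(\st k)\ge\eta^{jn_0}$ for \emph{all} $j$ simultaneously along that branch. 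Here I would run a standard "embedded" argument: on the event $A_{n_0}$, compare to a fixed supercritical GW process conditioned to grow at geometric rate $\eta^{n_0}$, whose survival-with-growth probability is some $p_1>0$ independent of the subcolumn, and note that the number of subcolumns at level $jn_0$ that must be controlled is only $M^{jn_0}$, a mild polynomial-in-the-generation count that is dominated by the exponential growth. A union bound over the (finitely many, at each fixed depth) subcolumns, together with the fact that failure to maintain $\eta^{m}$-growth in a subcolumn, given growth so far, happens with a probability that is summable over depth once we are deep enough, yields that the full event $\{\tilde N(\st k_n)\ge\eta^n\ \forall\,\st k_n,\ \forall n\}$ has positive probability. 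Concretely one shows $p_I\ge p_0\cdot\prod_{j\ge1}(1-\varepsilon_j)>0$ with $\sum_j\varepsilon_j<\infty$.

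The main obstacle I anticipate is \emph{not} the branching estimate per se but keeping the independence bookkeeping honest across levels: within a single $\Delta$-pair the two triangles spawn independent subprocesses, but triangles in a common subcolumn coming from \emph{different} $\Delta$-pairs, or aligned triangles, are not independent, so one cannot naively treat $\tilde N(\st k_{n+1})$ as a clean sum of i.i.d.\ contributions. This is exactly the role of \lemref{grouping}: at each level it lets us re-partition the surviving triangles in a subcolumn into three color classes, each of size $\ge\lfloor N/3\rfloor$, with no two triangles in a monochromatic class aligned, so that \emph{within a color class} the subprocesses are independent. The argument must therefore be run color-class by color-class, losing a factor of (at worst) $3$ in the growth rate at each re-pairing step — which is harmless, since we only need the comparison rate $\gamma^{n_0}p_0/3^{?}$ (absorbed into the choice of $n_0$) to beat $\eta^{n_0}$, and $\eta<\gamma$ gives room to spare. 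The second, more technical nuisance is that $\tilde Z^{V}$ has a different distribution from $Z^{V}$ (as the excerpt stresses) — but since only the \emph{expectations} agree and those are all we use, the domination by a GW process with mean matrix $\cM(k_1)\cdots\cM(k_n)$ still goes through verbatim. Assembling these pieces — fix $\eta$, invoke \lemref{finite exponential growth} for the seed event, compare to a supercritical GW process via self-similarity, use \lemref{grouping} to legitimize independence, and close with a summable-failure-probability union bound over subcolumns — completes the proof.
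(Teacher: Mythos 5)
Your proposal follows essentially the same route as the paper: seed with \lemref{finite exponential growth}, use \lemref{grouping} to create independence within each subcolumn, and close with a product $\prod_j(1-\varepsilon_j)>0$ coming from summable conditional failure probabilities. You also define the seed event $A_{n_0}$ cumulatively (``for all $0\le m\le n_0$''), which is precisely the one adjustment the paper makes to Lemma 4 of \cite{DS}: the authors point out that with the non-cumulative definition the claimed identity $\mathbb{P}(A_{n+1}^c\mid A_r\cap\cdots\cap A_n)=\mathbb{P}(A_{n+1}^c\mid A_n)$ is wrong in general, whereas the cumulative definition makes the $A_n$ a nested decreasing sequence so that $p_I=\lim_n\mathbb{P}(A_n)$ directly.

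There is, however, a gap in your block comparison. You want to choose $n_0$ so that $\gamma^{n_0}p_0>\eta^{n_0}$, asserting this is possible because $\gamma>\eta$. But $p_0=\mathbb{P}(A_{n_0})$ is itself a function of $n_0$, and the only lower bound you control, coming from the proof of \lemref{finite exponential growth}, is $\mathbb{P}(A_{n_0})\ge\mathbb{P}(J_{n_0})^2=\mathbb{P}(J_1)^{2(1+a+\cdots+a^{n_0-1})}$ with $a=\setcard{\msupp{\mu}}\ge 2$; this decays \emph{doubly} exponentially in $n_0$, far faster than $(\eta/\gamma)^{n_0}$. So you cannot tune $n_0$ to make an embedded Galton--Watson process with ``offspring mean $\gamma^{n_0}p_0$'' supercritical, and the argument as written does not close. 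What actually carries the proof is a \emph{one-step} concentration estimate in the style of Lemma 4 of \cite{DS}: given $A_n$, each level-$n$ subcolumn already holds at least $\lfloor\eta^n/3\rfloor$ unaligned pairs per color class, each contributing independently to the level-$(n+1)$ counts with mean $\gamma>\eta$, so $\mathbb{P}(A_{n+1}^c\mid A_n)$ decays fast enough in $\eta^n$ to be summable after a union bound over subcolumns. The seed probability $\mathbb{P}(A_{n_0})>0$ enters only as a single multiplicative factor in $p_I$; it is never a parameter you get to enlarge to force supercriticality.
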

\begin{proof}
We can follow literally the proof of Lemma 4 in \cite{DS}, except that we define here the sets
\begin{align} \eqlbl{A_n def}
  A_n := \set{\tilde{N}(\tmks) \ge \eta^m \text{ for all } \tmks\in\tree[m] \text{ for all } 0\le m \le n},
  \end{align}
  instead of $A_n = \set{\tilde{N}(\tmks) \ge \eta^m \text{ for all } \tmks\in\tree[m]}$. The (rather embarrassing)
  reason is that the equality $\mathbb{P}\left(A_{n+1}^{c}|A_r\cap \dots \cap
A_n\right)=\mathbb{P}\left(A_{n+1}^c|A_n\right)$ on the bottom of page 10 in \cite{DS} is wrong in general.
All one needs is that the equality sign can be replaced by a $\le$ sign, but since proving
this seems to be rather involved (although intuitively obvious) we chose to redefine $A_n$ as in
\eqref{eq:A_n def}, since this conveniently leads to a sequence of decreasing  sets with intersection the set in the statement of
 \lemref{exponential growth}.
The proof then continues as in \cite{DS}, deducing from \lemref{finite exponential growth} and \lemref{grouping}   that
\begin{align*}
      p_I=\probBig{\Intersec_{n\ge 0} A_n}
    = \lim_{n\rightarrow\infty} \prob{A_n}  > 0,
\end{align*}
which finishes the proof.
\end{proof}

Lemma \ref{lem:exponential growth} ensures that with positive probability $p_{I}$
 the offspring in all subcolumns of a surviving $\Delta$-pair never dies out.
Lemma \ref{lem:Delta-pair wpp} ensures that with positive probability $p_{\Delta}$ a surviving $\Delta$-pairs exists.
Using the self-similarity of the process, we thus have the following corollary:

\begin{cor} \corlbl{interval in square projection wpp}
If $\gamma>1$, then for all $\tns,\tnjs \in\tree$
\begin{align*}
 \cprob{\phi(\Lambda \intersec Q_{\tns,\tnjs} ) \text{ contains an interval }}{
        Q_{\tns,\tnjs} \subseteq \Lambda^n} \ge p_{\Delta} p_{I} > 0.
\end{align*}
\end{cor}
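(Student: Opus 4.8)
The plan is to deduce the corollary from the two preceding lemmas (\lemref{Delta-pair wpp} and \lemref{exponential growth}) by exploiting the self-similarity of the process. The key point is that, conditional on $Q_{\tns,\tnjs}\subseteq\Lambda^n$, the (rescaled) process inside $Q_{\tns,\tnjs}$ is distributed as the full process starting from $[0,1]^2$, so it suffices to prove the statement for $n=0$, i.e.\ that
\begin{align*}
 \prob{\phi(\Lambda)\text{ contains an interval}} \ge p_\Delta\, p_I > 0.
\end{align*}

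First I would invoke \lemref{Delta-pair wpp}: with probability $p_\Delta>0$ there is an $m\ge0$ and a level $m$ $\Delta$-pair $(L^m,R^m)$ inside some column $C=C^U_{\tmls}$ surviving in $\Lambda^m$. Fix such an event. Next, conditional on the survival of this $\Delta$-pair, I would apply \lemref{exponential growth} \emph{within} the $\Delta$-pair: by the defining property of the random variables $\tilde Z^V(\tnks)$ recorded in \eqref{eq:tilde Z}, the subcolumn counts of a surviving $\Delta$-pair have exactly the distribution treated in that lemma, so with (conditional) probability $p_I>0$ we have $\tilde N(\tnks)\ge\eta^n$ for all $\tnks\in\tree[n]$ and all $n\ge0$, where $1<\eta<\gamma$. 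On this event every subcolumn $C^U_{\tmls\tnks}$ of $C$, at every sublevel $n$, contains at least $\eta^n\ge 1$ triangle of each type surviving in $\Lambda^{m+n}$, hence contains a point of $L^m\cap\Lambda$ and a point of $R^m\cap\Lambda$; in particular the column $C^U_{\tmls\tnks}$ is nonempty in $\Lambda$. Since this holds for every node $\tnks$ at every level, every $M$-adic subinterval $I^U_{\tmls\tnks}$ of $I^U_{\tmls}$ meets $\phi(\Lambda)$, and because $\phi(\Lambda)=\phi(\bigcap_n\Lambda^n)=\bigcap_n\phi(\Lambda^n)$ is compact, a standard density/closedness argument shows $I^U_{\tmls}\subseteq\phi(\Lambda)$. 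Thus $\phi(\Lambda)$ contains the interval $I^U_{\tmls}$.

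Finally I would combine the two steps: by the independence of \lemref{exponential growth}'s event from the history up to level $m$ (again self-similarity, applied to the $\Delta$-pair), the probability that both a $\Delta$-pair exists \emph{and} its subcolumn offspring grows exponentially forever is at least $p_\Delta\,p_I>0$, giving the $n=0$ case; rescaling inside $Q_{\tns,\tnjs}$ gives the general statement.

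The main obstacle I anticipate is the bookkeeping in the deduction ``every subcolumn of $C$ is nonempty in $\Lambda$, hence $\phi(\Lambda)\supseteq I^U_{\tmls}$'': one must be careful that the exponential lower bound $\tilde N(\tnks)\ge\eta^n$ really survives to the limit (the triangles are nested across levels, so a surviving triangle at level $m+n+1$ lies in a surviving triangle at level $m+n$, and a compactness/Cantor-intersection argument then produces an actual point of $\Lambda$ in each subcolumn), and that the set of $\phi$-images of these points is dense in $I^U_{\tmls}$. This is the same argument as in \cite{DS} and is routine once \lemref{exponential growth} is in hand; everything else is a direct citation of the preceding two lemmas together with the self-similarity property.
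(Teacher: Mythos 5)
Your proposal is correct and takes essentially the same approach as the paper: the paper's ``proof'' is just the short paragraph immediately preceding the corollary, which combines \lemref{Delta-pair wpp} and \lemref{exponential growth} via self-similarity to get the bound $p_\Delta p_I$. Your write-up simply makes explicit the intermediate bookkeeping (reduction to $n=0$, the nested-column Cantor-intersection argument producing a point of $\Lambda$ in every subcolumn, and the density/closedness step giving $I^U_{\tmls}\subseteq\phi(\Lambda)$) that the paper leaves implicit.
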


As the next step, it is shown in \cite{DS}  that when $\Lambda\neq\es$,
 the maximum number of unaligned surviving squares at level $n$ grows to infinity as $n\to\infty$.
In each  unaligned surviving square
 the process runs independently and identically distributed to the process starting at $[0,1]^2$.
Because the number of these squares grows arbitrarily large,
 and in each square there is a positive probability that its projection contains a non-empty interval,
 this implies that almost surely the projection of $\Lambda$ contains an interval.

\nocite{MR1256400}
\bibliographystyle{amsalpha}
\bibliography{Cantor-refs}

\end{document}